\documentclass[12pt]{article}

\setlength{\textwidth}{16cm}
\setlength{\hoffset}{-1.3cm}

\usepackage{graphicx} % avoid epsfig or earlier such packages
\usepackage{url}      % for URLs and DOIs
\newcommand{\doi}[1]{\url{http://dx.doi.org/#1}}
\usepackage{amsmath}  % many want amsmath extensions
% Avoid loading unused packages (as done by some \LaTeX\ editors).
\usepackage{amssymb}
\pagestyle{myheadings}
\usepackage{mathrsfs}
\usepackage{multirow}

\usepackage{xspace,algorithm,algorithmic,tabularx}

\usepackage{setspace}
\usepackage{color}
\usepackage{pdflscape}
\usepackage{graphics}
\usepackage{rotating}
\usepackage{verbatim}
\usepackage{amsmath}
\usepackage{amssymb}
\usepackage{amsfonts}
\usepackage{mathrsfs}
\usepackage{array}
\usepackage{epic,eepic}
\usepackage{fancyhdr}
\usepackage{url}
\usepackage{subfigure}

\newtheorem{theorem}{Theorem}[section]
\newtheorem{definition}{Definition}[section]
\newtheorem{remark}{Remark}[section]
\newtheorem{lemma}{Lemma}[section]
\newtheorem{corollary}{Corollary}[section]

\newenvironment{proof}{\begin{trivlist}
    \item[\hskip\labelsep{\it Proof.}]}{$\hfill\Box$ \end{trivlist}}

\newcommand{\Vol}{{\rm Vol}}
\newcommand{\rd}{d}

\newcommand{\cS}{[s]}

\newcommand{\real}{\mathbb{R}}

%bold symbols
\def\bsa{\boldsymbol{a}}
\def\bsp{\boldsymbol{p}}
\def\bsx{\boldsymbol{x}}
\def\bsy{\boldsymbol{y}}

\def\bsk{\boldsymbol{k}}
\def\bsl{\boldsymbol{l}}
\def\bsh{\boldsymbol{h}}
\def\bs0{\boldsymbol{0}}
\def\bsq{\boldsymbol{q}}

\def\bsp{\boldsymbol{p}}

\def\bsgamma{\boldsymbol{\gamma}}

\def\bsmu{\boldsymbol{\mu}}
\newcommand{\uu}{\mathfrak{u}}

\def\cardu{\vert \uu \vert}

\def\nn{\mathbb{N}}
\def\zz{\mathbb{Z}}

\newcommand{\icomp}{\mathtt{i}}

\newcommand{\Pcal}{\mathcal{P}}

\newcommand{\rr}{\mathbb{R}}
\newcommand{\cc}{\mathbb{C}}
\newcommand{\I}{\mathbf{1}}

\newcommand{\wal}{{\rm wal}}
\newcommand{\Dcal}{\mathscr{D}}

\newcommand{\Var}{{\rm Var}}
\newcommand{\diag}{{\rm diag}}

\allowdisplaybreaks
%\title{Construction Algorithms for Polynomial Lattices of Arbitrary High Order}

\title{A Construction of Polynomial Lattice Rules with Small Gain Coefficients}

\author{Jan Baldeaux\thanks{The support of the Australian Research Council under its Centre of Excellence Program is gratefully acknowledged.} and Josef Dick\thanks{The support of the Australian Research Council under its Centre of Excellence Program is gratefully acknowledged. The author is supported by an Australian Research Council Queen Elizabeth II Research Fellowship.}}

\date{}

\begin{document}
\maketitle

\begin{abstract}
In this paper we construct polynomial lattice rules which have, in
some sense, small gain coefficients using a component-by-component
approach. The gain coefficients, as introduced by Owen, indicate to
what degree the method improves upon Monte Carlo. We show that the
variance of an estimator based on a scrambled polynomial lattice
rule constructed component-by-component decays at a rate of
$N^{-(2\alpha + 1) +\delta}$, for all $\delta >0$, assuming that the
function under consideration has bounded variation of order $\alpha$
and where $N$ denotes the number of quadrature points. An analogous
result is obtained for Korobov polynomial lattice rules. It is also
established that these rules are almost optimal for the function
space considered in this paper. Furthermore, we discuss the
implementation of the component-by-component approach and show how
to reduce the computational cost associated with it. Finally, we
present numerical results comparing scrambled polynomial lattice
rules and scrambled digital nets.
\end{abstract}

\vspace{1cm}

\noindent
{\bf Mathematics Subject Classification (2000)}: 65C05, 65D30, 65D32

\section{Introduction} \label{secintroduction}

Quasi-Monte Carlo rules $\frac{1}{N} \sum_{n=1}^N f(\bsx_n)$, $\bsx_1,\ldots, \bsx_N \in [0,1]^s$, are equal weight integration formulas used to approximate integrals over the unit cube $[0,1]^s$, where $s$ is typically large. One can roughly divide quasi-Monte Carlo rules into lattice rules, see e.g. \cite{N92,SJ94}, and digital nets, see e.g. \cite{DP09,N92}. In this paper we focus on digital nets, the construction of which is based on linear algebra over finite fields, see \cite{DP09,N92}. In particular, we are interested in a special case of digital nets, so-called polynomial lattice rules, which are constructed using polynomials over finite fields; polynomial lattice rules were introduced in \cite{N92a}, see also \cite{DKPS05,DP09,N92}.

Studying the approximation of integrals using quasi-Monte Carlo methods, one wants to have information on the resulting integration errors. However, depending on the integrand under consideration, estimates of integration errors might be very conservative or unknown; a possible remedy to this problem is randomization, which allows us to obtain statistical information on integration errors, \cite{O95}. Popular choices of randomization methods are digital shifts, see e.g. \cite{DP05,DP09}, and scrambling as introduced by Owen~\cite{O95}, see also \cite{DP09, HY00, matou, O97a, O97, Y99, YH02, YH05, YM99}. In this paper, we focus on scrambling. In particular, we are interested in the variance of the estimator
\begin{equation} \label{eqdefest}
 \hat{I}(f) = \frac{1}{b^m} \sum^{b^m-1}_{h=0} f(\bsy_h),
\end{equation}
where the points $\left\{ \bsy_h \right\}^{b^m-1}_{h=0}$ are obtained by applying the scrambling algorithm to a polynomial lattice rule. Notice that $\hat{I}(f)$ is an unbiased estimator of $\int_{[0,1]^s} f(\bsx) {\rm d} \bsx$, that is, $\mathbb{E}(\hat{I}(f)) = \int_{[0,1]^s} f(\bsx) {\rm d} \bsx$, see \cite{O95}.

The variance of the estimator given in Equation \eqref{eqdefest} admits the representation, see \cite{O97a}, %e.g. \cite[Theorem 13.6]{DP09}
\begin{equation} \label{eqintrovarest}
\Var(\hat{I}(f)) = \frac{1}{N} \sum_{\bsl \in \nn^s_0 \setminus  \left\{ \bs0 \right\}} \Gamma_{\bsl} \sigma^2_{\bsl}(f),
\end{equation}
where $N$ is the number of quadrature points. Equation \eqref{eqintrovarest} holds for any estimator obtained by applying the scrambling algorithm to a point set $\left\{ \bsx_{h} \right\}^{b^m-1}_{h=0}$ such that $\bsx_{h} \in [0,1)^s$. Here, the values $\Gamma_{\bsl}$ are the so-called gain coefficients which depend only on the quadrature points and the values $\sigma_{\bsl}(f)$ depend only on the integrand $f$. They are derived from the crossed and nested Anova decomposition of $f$, see \cite{O97a}, and can be expressed in terms of Haar coefficients of the function $f$, see \cite{O97a}, or also as a sum of certain Walsh coefficients of $f$, see \cite[Section~13.2]{DP09}.  In this sense, Equation \eqref{eqintrovarest} shows that $\Var(\hat{I}(f))$ can be expressed as a weighted sum of gain coefficients, where we interpret the $\sigma^2_{\bsl}(f)$ as weights.

In our investigations we consider a space of functions for which
$\sigma_{\bsl}(f)$ has a certain rate of decay. More precisely, for
$0 < \alpha \le 1$ we introduce a norm of the form
\begin{equation}\label{norm_unweighted}
\|f\|_\alpha = \sup_{\bsl \in \nn_0^s} b^{\alpha |\bsl|_1} \sigma_{\bsl}(f),
\end{equation}
where $|\bsl|_1 = l_1 + \cdots + l_s$ for $\bsl = (l_1,\ldots,
l_s)$. We show that $\alpha$ is related to the smoothness of $f$ in
the following sense:
\begin{quote}
If $f \in L_2([0,1]^s)$ has bounded variation of order $\alpha$,
then $\|f\|_\alpha < \infty$.
\end{quote}
See Corollary~\ref{corHoeldcontmodcont} for details.

From \eqref{norm_unweighted} we obtain $\sigma_{\bsl}(f) \le
b^{-\alpha |\bsl|_1} \|f\|_\alpha$ and by substituting this formula
into \eqref{eqintrovarest} we obtain
\begin{equation}\label{introboundvar}
\Var(\hat{I}(f)) \le \frac{1}{N} \sum_{\bsl \in \nn_0^s \setminus \{\bs0\}} \Gamma_{\bsl} b^{-2 \alpha |\bsl|_1} \|f\|_\alpha^2.
\end{equation}
To construct polynomial lattice rules of high quality we use
\begin{equation}\label{sum_criterion}
\frac{1}{N} \sum_{\bsl \in \nn_0^s \setminus \{\bs0\}} \Gamma_{\bsl} b^{-2 \alpha |\bsl|_1}
\end{equation}
as quality criterion. Notice that the sum \eqref{sum_criterion} only depends on the quadrature points and not on the function $f$. We show that \eqref{sum_criterion} has a simple closed form for any $0 < \alpha \le 1$ which can easily be computed if the quadrature points are a digital net. The case $\alpha = 0$ needs to be excluded since in this case \eqref{sum_criterion} is infinite.

Our aim is to find polynomial lattice rules for which the weighted sum of gain coefficients \eqref{sum_criterion} is minimized.  It is known from \cite{O97a, O97,YH02} that a small quality parameter $t$ of a digital $(t,m,s)$-net yields small gain coefficients. In fact, one has $\Gamma_{\bsl} = 0$ for all $\bsl \in \nn_0^s \setminus \{\bs0\}$ such that $|\bsl|_1 \le m- t$. Here, on the other hand, we aim to minimize \eqref{sum_criterion} since it can be used to bound the variance of the estimator $\Var(\hat{I}(f))$. In other words, we minimize the upper bound on the variance \eqref{introboundvar} for all functions $f$ with $\|f\|_\alpha < \infty$ over the class of polynomial lattice rules.

We introduce additional parameters $\bsgamma = (\gamma_j)_{1 \le j \le s}$ in the norm \eqref{norm_unweighted} in the sense of \cite{SW98}. In this case the criterion \eqref{sum_criterion} depends on the additional parameters $\bsgamma = (\gamma_j)_{1 \le j \le s}$, in which case a small quality parameter $t$ does not necessarily yield the smallest possible gain coefficients anymore. In such a situation component-by-component constructions \cite{SR02} have proven useful since one can then optimize the quadrature points also with respect to the $\gamma_j$. This is also the approach taken here. More precisely, we show that by constructing polynomial lattice rules component-by-component one obtains a convergence of $$\Var(\hat{I}(f)) = O(N^{-(2\alpha+1) + \delta}) \quad \mbox{for any } \delta > 0.$$ Apart from $\delta$, which can be arbitrarily close to $0$, this rate is best possible as shown in Section~\ref{seclowerboundwcv} for a large class of randomized algorithms. Further, if $\sum_{j=1}^\infty \gamma_j < \infty$, then the bound \eqref{introboundvar} does not depend on the dimension~$s$. This result is stronger than what can be obtained for $(t,m,s)$-nets, since the increase of the $t$ value of the form $t \approx s$ prevents one from obtaining a bound independent of the dimension only assuming that $\sum_{j=1}^\infty \gamma_j < \infty$. This condition is also necessary, see \cite{YH05} for a result on a related space. Hence the rules we construct here are simultaneously optimal in terms of the convergence rate as well as in terms of their dependence on the dimension.

Notice that the additional parameters $\bsgamma = (\gamma_j)_{j\ge 1}$, as introduced in \cite{SW98}, have been found to be very useful from both a theoretical and a practical point of view. They allow us to associate more importance with some variables than with others, which ties in nicely with concepts such as effective dimension, see, e.g., \cite{CaflischMOVal}. This has been used to explain the success of quasi-Monte Carlo rules in finance.

We now give the structure of the paper. In Section \ref{secprelim}, we recall the definition of polynomial lattice rules, the scrambling algorithm and define the function space studied in this paper. The variance of estimators based on scrambled polynomial lattice rules is studied in Section \ref{secvarscrambledpolyrules}.  Next, in Sections \ref{seccbc} and \ref{seckorobov}, we construct polynomial lattice rules for which the variance of the associated estimator converges at a rate of $N^{-(1 + 2 \alpha) + \delta}$, for all $\delta >0$. The constructions are based on a component-by-component approach and the Korobov construction respectively. In Section \ref{seclowerboundwcv}, we define a large class of randomized algorithms, which includes adaptive ones, and consequently establish that the variance of any estimator based on an algorithm from this class converges at most a rate of $N^{-(1+2 \alpha)}$ for the function space under consideration in this paper. Hence our constructions  are almost optimal for the class of algorithms defined in Section~\ref{seclowerboundwcv}. In Section \ref{secimplementation}, we study the implementation of the component-by-component approach, in particular, we show how to reduce the computational effort associated with it. This implementation is made use of in Section \ref{secnumexp}, where we compare the performance of scrambled polynomial lattice rules constructed in Section \ref{seccbc} to the performance of scrambled digital nets. %Finally, Section \ref{secconc} concludes the paper and ideas for future work are given.

\section{Preliminaries} \label{secprelim}

In this section, we define polynomial lattice rules, recall the scrambling algorithm and introduce the function space under consideration in this paper.

\subsection{Polynomial Lattice Rules} \label{subsecpollattrules}

Polynomial lattice rules were introduced in \cite{N92a}, see also \cite{DKPS05, DP09, N92}. We fix a prime $b$ and denote by $\zz_b$ the finite field containing $b$ elements and by $\zz_b((x^{-1}))$ the field of formal Laurent series over $\zz_b$. Elements of $\zz_b((x^{-1}))$ are formal Laurent series,
\begin{displaymath}
 L=\sum^{\infty}_{l=w} t_l x^{-l} \, ,
\end{displaymath}
where $w$ is an arbitrary integer and all $t_l \in \zz_b$. The field $\zz_b ((x^{-1}))$ contains the field of rational functions over $\zz_b$ as a subfield. Finally, the set of polynomials over $\zz_b$ is denoted by $\zz_b[x]$. For an integer $m$, we denote by $v_m$ the map from $\zz_b((x^{-1}))$ to $[0,1)$ defined by
\begin{displaymath}
 v_m \left( \sum^{\infty}_{l=w} t_l x^{-l} \right) = \sum^{m}_{l=\max(1,w)} t_l b^{-l} \, .
\end{displaymath}
The following definition of polynomial lattice rules stems from \cite{N92a}, see also \cite{DP09, N92}.

\begin{definition} \label{defpolylatt} Let $b$ be prime and $m$ be an integer. For a given dimension $s \geq 1$, choose $p(x) \in \zz_b[x]$ with $deg(p(x))=m$ and $q_1(x), \dots, q_s(x) \in \zz_b[x]$. For $0 \leq h < b^m$ let $h=h_0 + h_1 b + \dots + h_{m-1} b^{m-1}$ be the $b$-adic expansion of $h$. With each such $h$ we associate the polynomial
\begin{displaymath}
 \overline{h}(x) = \sum^{m-1}_{r=0} h_r x^r \in \zz_{b}[x] \, .
\end{displaymath}
Then $S_{p,m}(\bsq)$, where $\bsq=(q_1,\dots,q_s)$, is the point set consisting of the $b^m$ points
\begin{displaymath}
\bsx_h = \left( v_m \left(\frac{ \overline{h}(x) q_1(x) }{p(x)} \right) , \dots, v_m \left( \frac{ \overline{h}(x) q_s(x) }{p(x)} \right) \right) \in [0,1)^s \, ,
\end{displaymath}
for $0 \leq h < b^m$. A quasi-Monte Carlo rule using the point set $S_{p,m}(\bsq)$ is called a polynomial lattice rule.
\end{definition}

We remark that polynomial lattice point sets are also digital nets, see \cite{DP09,N87,N92}.

For the remainder of the paper, we use the following notation: We write $\vec{h}$ for vectors over $\zz_b$ and $\bsh$ for vectors over $\zz$ or $\rr$. Polynomials over $\zz_b$ are denoted by $h(x)$ and vectors of polynomials by $\bsh(x)$. Furthermore, given an integer $h$ with $b$-adic expansion $h=\sum^{\infty}_{r=0} h_r b^r$, we denote the associated polynomial by
\begin{displaymath}
 \overline{h}(x) = \sum^{\infty}_{r=0} h_r x^r \, .
\end{displaymath}
For arbitrary $\bsh(x)=(h_1(x), \dots, h_s(x) ) \in \zz_b[x]^s$ and $\bsq(x)=(q_1(x),\dots,q_s(x)) \in \zz_b[x]^s$, we define the ``inner product''
\begin{displaymath}
 \bsh(x) \cdot \bsq(x) = \sum^s_{j=1} h_j q_j(x) \in \zz_b[x]
\end{displaymath}
and we write $q(x) \equiv 0 \pmod{ p(x) }$ if $p(x)$ divides $q(x)$ in $\zz_b[x]$.

Finally, we introduce the dual lattice which plays an important role in numerical integration, see \cite{DKPS05, DP09}, which requires us to introduce the following function: For a non-negative integer $k$ with $b$-adic expansion $k=k_0 + k_1 b + \dots $ we write ${\rm tr}_m(k) = k_0 + k_1 b + \dots + k_{m-1} b^{m-1} $ and thus the associated polynomial
\begin{displaymath}
 {\rm tr}_{m}(k)(x) = k_0 + k_1 x + \dots k_{m-1} x^{m-1} \in \zz_b[x]
\end{displaymath}
has degree $< m$. For a vector $\bsk \in \nn^s_0$, ${\rm tr}_{m} (\bsk)$ is defined componentwise.

\begin{definition} \label{defdualpollatt} Let $\bsq(x)=(q_1(x), \dots,q_s(x)) \in \zz_b[x]^s$, then the dual polynomial lattice of $S_{p,m}( \bsq)$ is given by
\begin{eqnarray*}
\Dcal & = & \Dcal_{p}(\bsq) \\  & = & \{ \bsk \in \nn^s_0: \\ && \;\;\; {\rm tr}_{m}(k_1)(x) + \dots + {\rm tr}_{m}(k_2)(x) q_2 + \dots + {\rm tr}_{m} (k_s)(x) q_s(x) \equiv \bs0 \pmod{ p(x) } \}.
\end{eqnarray*}
\end{definition}

\subsection{The Scrambling Algorithm} \label{subsecscramblingalg}

The scrambling algorithm is a randomization algorithm which was introduced by Owen, see \cite{O95} and also \cite{HHY04,HY00,O97a,O97,Y99,YH02,YM99}.

We now describe the scrambling algorithm using a generic point $\bsx \in [0,1)^s$, where $\bsx=(x_1,\dots,x_s)$ and
\begin{displaymath}
 x_j=\frac{\xi_{j,1}}{b} + \frac{\xi_{j,2}}{b^2} + \dots .
\end{displaymath}
Then the scrambled point shall be denoted by $\bsy \in [0,1)^s$, where $\bsy=(y_1,\dots,y_s)$,
\begin{displaymath}
 y_j=\frac{\eta_{j,1}}{b} + \frac{\eta_{j,2}}{b^2} + \dots .
\end{displaymath}
The permutation applied to $\xi_{j,l}$, $j=1,\dots,s$ depends on $\xi_{j,k}$, for $1 \leq k < l$. In particular, $\eta_{j,1}=\pi_j(\xi_{j,1})$, $\eta_{j,2}=\pi_{j,\xi_{j,1}}(\xi_{j,2})$, $\eta_{j,3}=\pi_{j,\xi_{j,1},\xi_{j,2}}(\xi_{j,3})$ and in general
\begin{displaymath}
 \eta_{j,k}=\pi_{j,\xi_{j,1},\dots,\xi_{j,k-1}}(\xi_{j,k}) \, , k \geq 2 \, ,
\end{displaymath}
where $\pi_j$ and $\pi_{j,\xi_{j,1},\dots,\xi_{j,k-1}}$, $k \geq 2$ are random permutations of $\left\{ 0,1,\dots,b-1\right\}$. We assume that permutations with different indices are mutually independent. Also, if we apply the scrambling algorithm to $\bsx$ to obtain $\bsy$, then $\bsy$ is uniformly distributed in $[0,1)^s$, see \cite[Proposition 2]{O95}. Finally, it was shown in \cite{O95} that the scrambling algorithm preserves the $(t,m,s)$-net property with probability $1$, i.e applying the scrambling algorithm to the points of a $(t,m,s)$-net results in a $(t,m,s)$-net with probability $1$.

\subsection{A Weighted Walsh Function Space based on Variance} \label{subsecweightfunspace}

In this section, we introduce the function space under consideration in this paper. In particular, we consider weighted spaces and for this purpose, we introduce a sequence of positive, non-increasing weights $\bsgamma=(\gamma_j)^{\infty}_{j=1}$. The purpose of the  weights is to model the importance of the different variables and we point out that the idea stems from \cite{SW98}. For $s \in \nn$, let $[s]=\left\{1,\dots,s \right\}$ and for $\uu \subseteq [s]$ let $ \bsgamma_{\uu} := \prod_{j \in \uu} \gamma_j$ be the weight associated with the projection onto coordinates whose index is contained in $\uu$.

Walsh functions have been an important tool in the analysis of digital nets; in \cite{LT94}, Walsh functions were used for the first time to analyze nets and the connection between numerical integration using digital nets and Walsh functions was made in \cite{DP05}, see also \cite{DP09}.

We now briefly recall the definition of Walsh functions. Let $\nn_0$ denote the set of nonnegative and $\nn$ the set of positive integers. Each $k \in \nn$ has a unique $b$-adic representation $k=\sum^a_{i=0} k_i b^i$ with digits $k_i \in \left\{0, \dots,b-1 \right\}$ for $0 \leq i \leq a$, where $k_a \neq 0$. For $k=0$ we have $a=0$ and $k_0=0$. Similarly, each $x \in [0,1)$ has a $b$-adic representation $x=\sum^{\infty}_{i=1} \xi_i b^{-i}$ with digits $\xi_i \in \left\{ 0,\dots,b-1\right\}$ for $i \geq 1$. This representation is unique in the sense that infinitely many of the $\xi_i$ must differ from $b-1$. We define the $k$th Walsh function in base $b$, $\wal_k :[0,1) \rightarrow \cc $ by
\begin{displaymath}
 \wal_{k}(x) := \exp( 2 \pi \icomp (\xi_1 k_0 + \dots + \xi_{a+1} k_a)/b ) \, .
\end{displaymath}
For dimension $s \geq 2$ and vectors $\bsk =(k_1,\dots,k_s) \in \nn^s_0$ and $\bsx=(x_1,\dots,x_s) \in [0,1)^s$ we define $\wal_{\bsk} : [0,1)^s \rightarrow \cc$ by
\begin{displaymath}
 \wal_{\bsk} (\bsx) := \prod^s_{j=1} \wal_{k_j}(x_j) \, .
\end{displaymath}
Studying integration errors resulting from the approximation of an integral based on a digital net or a polynomial lattice rule, it is useful to consider the Walsh series of the integrand $f$. For $f \in L_2([0,1]^s)$, the Walsh series of $f$ is given by
\begin{equation} \label{eqdefWalshseriesf}
 f(\bsx) \sim \sum_{\bsk \in \nn^s_0} \hat{f}(\bsk) \wal_{\bsk}(\bsx) \, ,
\end{equation}
where the Walsh coefficients $\hat{f}(\bsk)$ are given by
\begin{displaymath}
 \hat{f}(\bsx)=\int_{[0,1]^s} f(\bsx) \overline{\wal_{\bsk}(\bsx)} d \bsx \, .
\end{displaymath}
We do not necessarily have equality in Equation \eqref{eqdefWalshseriesf}, however, the completeness of the Walsh function system $\left\{ \wal_{\bsk} : \bsk \in \nn^s_0 \right\}$ (see for instance \cite[Appendix A]{DP09}), implies that
\begin{displaymath}
 \Var[f] = \sum_{\bsk \in \nn^s_0 \setminus \left\{ \bs0 \right\} } \vert \hat{f}(\bsk) \vert^2 \, ,
\end{displaymath}
where $\Var[f] = \int_{[0,1]^s} (f(\bsx) - \overline{f} )^2 d \bsx$, and where $\overline{f} = \int_{[0,1]^s} f(\bsx) d \bsx$.

Let $\sigma^2_{(\bsl_{\uu}, \bs0)} = \sum_{\bsk \in L_{(\bsl_{\uu}, \bs0)}} \vert \hat{f}(\bsk) \vert^2$, where
\begin{equation} \label{eqdefLlu}
L_{(\bsl_{\uu},\bs0)} = \left\{ \bsk \in \nn^s_0 : b^{l_i -1} \leq k_i < b^{l_i} \textrm{ for } i \in  \uu \textrm{ and } k_i = 0 \textrm{ for } i \in [s] \setminus \uu  \right\}.
\end{equation}
Further let $\vert \bsl \vert_{1} = \sum^s_{j=1} l_j$ for $\bsl=(l_1,\dots,l_s)$. For $0 < \alpha \le 1$ we define a weighted norm for functions $f \in L_2([0,1]^s)$ by
\begin{equation}\label{defnormalpha}
\|f\|_\alpha = \max_{\uu \subseteq [s]} \gamma_\uu^{-1/2}
\sup_{\bsl_\uu \in \mathbb{N}^{|\uu|}} b^{\alpha |\bsl_\uu|_1}
\sigma_{(\bsl_\uu, \bs0)}(f).
\end{equation}
For $0 < \alpha \le 1$ define a space $V_{\alpha,s,\bsgamma} \subseteq L_2 ([0,1]^s)$ consisting of all functions $f$ for which $\|f\|_\alpha < \infty$. (One could of course use some $\ell_p$ norm instead of the supremum-norm to define $\|\cdot \|_\alpha$ and the function space, but these do not yield a quality criterion of the form \eqref{sum_criterion} which can be used for our construction, see \eqref{eqbound} and Lemma~\ref{theocompbound} below.)

The following observation stems from \cite[Section 13.5]{DP09}: For
a subinterval $J = \prod_{i=1}^s [x_i, y_i)$ with $0 \le x_i < y_i
\le 1$ and a function $f:[0,1)^s \rightarrow \real$, let the
function $\Delta(f,J)$ denote the alternating sum of $f$ at the
vertices of $J$ where adjacent vertices have opposite signs. (Hence
for $f = \prod_{i=1}^s f_i$ we have $\Delta(f,J) = \prod_{i=1}^s
(f_i(x_i) - f_i(y_i))$.)

We define the generalized variation in the sense of
Vitali\index{variation!fractional order} of order $0 < \alpha \le 1$
by
\begin{equation*}%\label{fracVitalivar}
V^{(s)}_{\alpha}(f) = \sup_{{\Pcal}} \left(\sum_{J \in \Pcal}
\Vol(J)
\left|\frac{\Delta(f,J)}{\Vol(J)^{\alpha}}\right|^{2}\right)^{1/2},
\end{equation*}
where the supremum is extended over all partitions $\Pcal$ of
$[0,1]^s$ into subintervals and $\Vol(J)$ denotes the volume of the
subinterval $J$.

For $\alpha = 1$ and if the partial derivatives of $f$ are
continuous on $[0,1]^s$ we also have the formula
\begin{equation*}%\label{eq_formelV}
V_{1}^{(s)}(f) = \left(\int_{[0,1]^s} \left|\frac{\partial^s
f}{\partial x_1\cdots \partial x_s} \right|^{2} \rd
\bsx\right)^{1/2}.
\end{equation*}

Until now we did not take projections to lower-dimensional faces
into account.

For $\emptyset \neq u \subseteq\cS$, let $V_{\alpha}^{(|u|)}(f_u;u)$
be the generalized Vitali variation with coefficient $0 < \alpha \le
1$ of the $|u|$-dimensional function
$$f_u(\bsx_u) = \int_{[0,1)^{s-|u|}} f(\bsx) \rd \bsx_{\cS\setminus
u}.$$ For $u = \emptyset$ we have $f_\emptyset = \int_{[0,1)^{s}}
f(\bsx) \rd \bsx_{\cS}$ and we define
$V_{\alpha}^{(|\emptyset|)}(f_\emptyset;\emptyset) = |f_\emptyset|$.
Then
\begin{equation}\label{eq_varhk}
V_{\alpha}(f) = \left(\sum_{u \subseteq\cS}
\left(V^{(|u|)}_{\alpha}(f_u;u) \right)^{2}\right)^{1/2}
\end{equation}
is called the generalized Hardy and Krause variation of $f$ on
$[0,1]^s$.

A function $f$ for which $V_{\alpha}(f) < \infty$ is said to be of
finite variation of order $\alpha$.

The following result is from \cite{DG10} and
\cite[Section~13.5]{DP09}.

\begin{corollary} \label{corHoeldcontmodcont}
Let $b \ge 2$ be a natural number and let $f \in L_2([0,1]^s)$ have
bounded variation $V_\alpha(f) < \infty$ of order $0< \alpha \leq
1$. Then
\begin{displaymath}
\|f\|_\alpha \le \max\left(\|f\|_{L_2} \gamma_{\emptyset}^{-1},
V_\alpha(f)  \max_{\emptyset \neq \uu \subseteq [s]}
\gamma_\uu^{-1/2} (b-1)^{(\alpha - 1/2)_+ \cardu}\right).
\end{displaymath}
\end{corollary}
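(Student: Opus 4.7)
The plan is to split the maximum in \eqref{defnormalpha} into the contribution from $\uu = \emptyset$ and the contributions from the nonempty subsets of $[s]$, and to match each with the corresponding term on the right-hand side. For $\uu = \emptyset$ the supremum is vacuous and the relevant quantity is $\gamma_\emptyset^{-1/2}\sigma_{\bs0}(f) = \gamma_\emptyset^{-1/2}|\hat{f}(\bs0)|$; by Cauchy--Schwarz, $|\hat{f}(\bs0)| = |\int_{[0,1]^s} f(\bsx)\,\rd\bsx| \le \|f\|_{L_2}$, and since $\gamma_\emptyset = \prod_{j \in \emptyset}\gamma_j = 1$ this is bounded by $\|f\|_{L_2}\gamma_\emptyset^{-1}$, matching the first slot of the outer maximum. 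For nonempty $\uu \subseteq [s]$, the first step is to observe that every $\bsk \in L_{(\bsl_\uu,\bs0)}$ has $k_i = 0$ for $i \notin \uu$, so $\wal_{\bsk}$ depends only on the coordinates indexed by $\uu$ and hence $\hat{f}(\bsk) = \hat{f_\uu}(\bsk_\uu)$. Consequently
\begin{displaymath}
\sigma_{(\bsl_\uu,\bs0)}(f)^2 = \sum_{\bsk_\uu} |\hat{f_\uu}(\bsk_\uu)|^2,
\end{displaymath}
where the sum runs over those $\bsk_\uu$ with $b^{l_i-1} \le k_i < b^{l_i}$ for $i \in \uu$, so the problem reduces to a purely $\cardu$-dimensional estimate on the Walsh detail of the marginal $f_\uu$ at scale $\bsl_\uu$.

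The core technical ingredient, which I would invoke from \cite{DG10} and \cite[Section~13.5]{DP09}, is the sharp bound
\begin{displaymath}
\sigma_{(\bsl_\uu,\bs0)}(f) \le V_{\alpha}^{(\cardu)}(f_\uu;\uu)\, b^{-\alpha |\bsl_\uu|_1}\,(b-1)^{(\alpha - 1/2)_+ \cardu}.
\end{displaymath}
Its derivation identifies $\sigma_{(\bsl_\uu,\bs0)}(f)^2$ with the squared $L_2$-norm of the orthogonal projection of $f_\uu$ onto a tensor product of $b$-ary Haar detail spaces, expresses this projection via the alternating sums $\Delta(f_\uu, J)$ over dyadic boxes $J$ of side-lengths $b^{-l_i}$, and invokes the definition of $V_{\alpha}^{(\cardu)}(f_\uu;\uu)$ on the associated dyadic partition. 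The factor $(b-1)^{(\alpha-1/2)_+ \cardu}$ arises because each detail shell contains $(b-1)^{\cardu} b^{|\bsl_\uu|_1 - \cardu}$ basis elements, and trading this $\ell_2$-cardinality against the $\alpha$-weighted volume $b^{-\alpha |\bsl_\uu|_1}$ is tight precisely at $\alpha = 1/2$, losing one $(b-1)^{1/2}$-factor per coordinate when $\alpha > 1/2$.

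Given this bound, multiplying by $b^{\alpha|\bsl_\uu|_1}$, taking the supremum over $\bsl_\uu \in \nn^{\cardu}$, using $V_{\alpha}^{(\cardu)}(f_\uu;\uu) \le V_\alpha(f)$ from \eqref{eq_varhk}, and dividing by $\gamma_\uu^{1/2}$ yield
\begin{displaymath}
\gamma_\uu^{-1/2}\sup_{\bsl_\uu \in \nn^{\cardu}} b^{\alpha|\bsl_\uu|_1}\sigma_{(\bsl_\uu,\bs0)}(f) \le V_\alpha(f)\,\gamma_\uu^{-1/2}(b-1)^{(\alpha-1/2)_+\cardu}.
\end{displaymath}
Taking the maximum over $\emptyset \neq \uu \subseteq [s]$ gives the second slot of the outer maximum, and combining with the $\uu = \emptyset$ case completes the proof. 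The main obstacle is the displayed Walsh-detail estimate in the middle paragraph: the reduction to $f_\uu$ and the final assembly are essentially bookkeeping, but obtaining the precise exponent $(\alpha-1/2)_+ \cardu$ rests on the delicate Haar-projection and Vitali-variation computation of \cite{DG10} and \cite[Section~13.5]{DP09}, which I would cite rather than re-derive here.
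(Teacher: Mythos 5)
The paper offers no proof of this corollary: it simply states "The following result is from \cite{DG10} and \cite[Section~13.5]{DP09}," so there is no in-paper argument to compare against. Your sketch is nonetheless a correct and sensible reconstruction of how the result follows from that cited material, and the reduction steps you supply are all valid.

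Specifically: the $\uu=\emptyset$ case is handled correctly (indeed $L_{(\bsl_\emptyset,\bs0)}=\{\bs0\}$, so $\sigma_{\bs0}(f)=|\hat f(\bs0)|\le\|f\|_{L_2}$, and the convention $\gamma_\emptyset=\prod_{j\in\emptyset}\gamma_j=1$ makes $\gamma_\emptyset^{-1/2}=\gamma_\emptyset^{-1}$, so the first slot of the outer maximum is exactly recovered; one could equally use $|\hat f(\bs0)|=V_\alpha^{(0)}(f_\emptyset;\emptyset)\le V_\alpha(f)$, but $\|f\|_{L_2}$ is what the statement asserts). Your identity $\hat f(\bsk)=\hat{f_\uu}(\bsk_\uu)$ for $\bsk$ supported on $\uu$ is correct by Fubini, and the monotonicity $V_\alpha^{(\cardu)}(f_\uu;\uu)\le V_\alpha(f)$ follows immediately from the $\ell_2$-sum definition in \eqref{eq_varhk}. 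The assembly from the cited shell estimate $\sigma_{(\bsl_\uu,\bs0)}(f)\le V_\alpha^{(\cardu)}(f_\uu;\uu)\,b^{-\alpha|\bsl_\uu|_1}(b-1)^{(\alpha-1/2)_+\cardu}$ to the stated bound is pure bookkeeping and is done correctly. Since the paper likewise delegates the single nontrivial estimate to \cite{DG10} and \cite[Section~13.5]{DP09}, your proof is as complete as the paper's own treatment, and in fact spells out the elementary reduction which the paper leaves implicit. I see no gap beyond the deliberate citation of the Haar--Vitali estimate, which matches the paper's own stance.
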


Hence every function $f \in L_2([0,1]^s)$ which has bounded
variation of order $0 < \alpha \leq 1$ is in
$V_{\alpha,s,\bsgamma}$. For the extreme case $\alpha = 0$ one
obtains $V_{0,s,\bsgamma} = L_2([0,1]^s)$, but this case is not
included in our investigations since the criterion
\eqref{sum_criterion} is infinite in this case, see \eqref{eqbound}
and Lemma~\ref{theocompbound} below.

\section{The Variance of Estimators based on Scrambled Polynomial Lattice Rules} \label{secvarscrambledpolyrules}

In this section, we discuss the variance of the estimator
\begin{equation} \label{eqdefestimator}
 \hat{I}(f) = \frac{1}{b^m} \sum^{b^m-1}_{h=0} f(\bsy_{h}) \, ,
\end{equation}
where the points $\bsy_{0}, \dots, \bsy_{b^m-1}$ are obtained by
applying the scrambling algorithm to a digital $(t,m,s)$-net over
$\zz_b$.

We use the following notation: For a non-negative integer $k$ with
$b$-adic expansion
\begin{displaymath}
 k=k_0 + k_1 b + \dots ,
\end{displaymath}
we write $\vec{k}=(k_0,k_1,\dots)^{\top}$, which is an
infinite-dimensional vector, and we use
\begin{displaymath}
 {\rm tr}_{m} (\vec{k}) = (k_0,k_1,\dots,k_{m-1})^{\top} \, .
\end{displaymath}

We now introduce the integration problem studied in this paper, in
particular, we are interested in the worst-case variance of
multivariate integration in $V_{\alpha,s,\bsgamma}$ using a
scrambled quasi-Monte Carlo rule $Q_{b^m,s}$:
\begin{displaymath}
\Var(Q_{b^m,s}, V_{\alpha,s,\bsgamma}) = \sup_{f \in V_{\alpha,s,\bsgamma}, \|f\|_\alpha \le 1} \Var[ \hat{I}(f, Q_{b^m,s}) ] \, ,
\end{displaymath}
where $\hat{I}(f,Q_{b^m,s})$ denotes the estimator based on the
point set obtained by applying the scrambling algorithm to
$Q_{b^m,s}$. We denote the quasi-Monte Carlo rule based on a
polynomial lattice rule $S_{p,m}(\bsq)$ by $Q_{b^m,s}(\bsq)$ and the
associated worst-case variance by $\Var(Q_{b^m,s}(\bsq),
V_{\alpha,s,\bsgamma} )$. For $k = \kappa_0 + \kappa_1 b + \cdots +
\kappa_{a-1} b^{a-1} \in \mathbb{N}_0$ let
\begin{equation*}
r_{\alpha, \gamma}(k) = \left\{\begin{array}{ll} 1 & \mbox{if } k = 0, \\
\gamma \frac{b}{(b-1) b^{\alpha a}} & \mbox{if } k > 0.
\end{array} \right.
\end{equation*}
For $\bsk = (k_1, \ldots, k_s) \in \mathbb{N}_0^s$ let
$r_{\alpha,\bsgamma}(\bsk) = \prod_{j=1}^s
r_{\alpha,\gamma_j}(k_j)$.

The next corollary gives a bound on the quantity
$\Var(Q_{b^m,s}(\bsq), V_{\alpha,s,\bsgamma} )$.

\begin{corollary} \label{corrboundworstcasevar}
Let $0 < \alpha \le 1$, $\bsq \in \zz_b[x]^s$ be a generating vector
for a polynomial lattice rule with modulus $p$, and
$\Var(Q_{b^m,s}(\bsq), V_{\alpha,s,\bsgamma} )$ be defined as above.
Then
\begin{displaymath}
\Var(Q_{b^m,s}(\bsq), V_{\alpha,s,\bsgamma} ) \leq \sum_{\bsk \in
\Dcal_{p}(\bsq) \setminus \{\bs0\}} r_{2 \alpha + 1,
\bsgamma}(\bsk),
\end{displaymath}
where $ \Dcal_p (\bsq)$ is the dual polynomial lattice.
\end{corollary}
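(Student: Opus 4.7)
The plan is to combine Owen's variance decomposition \eqref{eqintrovarest} with the weighted norm \eqref{defnormalpha} and the standard gain-coefficient identity for digital nets. Because the supremum defining $\Var(Q_{b^m,s}(\bsq),V_{\alpha,s,\bsgamma})$ is taken over $\|f\|_\alpha \le 1$, the definition \eqref{defnormalpha} immediately supplies, for every nonempty $\uu \subseteq [s]$ and every $\bsl_\uu \in \nn^{|\uu|}$, the bound $\sigma_{(\bsl_\uu,\bs0)}^2(f) \le \gamma_\uu\, b^{-2\alpha|\bsl_\uu|_1}$. Inserting this into \eqref{eqintrovarest} reduces the task to estimating the deterministic quantity
\begin{equation*}
\frac{1}{b^m}\sum_{\emptyset \neq \uu \subseteq [s]} \gamma_\uu \sum_{\bsl_\uu \in \nn^{|\uu|}} \Gamma_{(\bsl_\uu,\bs0)}\, b^{-2\alpha|\bsl_\uu|_1}.
\end{equation*}

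The crucial second step is to rewrite each gain coefficient in terms of the dual polynomial lattice. For a digital net, a standard Walsh/character calculation (carried out for instance in \cite[Section~13.2]{DP09}) yields the bound
\begin{equation*}
\Gamma_{(\bsl_\uu,\bs0)} \le b^m\, N(\bsl_\uu)\,\left(\frac{b}{b-1}\right)^{|\uu|} b^{-|\bsl_\uu|_1},
\end{equation*}
where $N(\bsl_\uu) := |L_{(\bsl_\uu,\bs0)} \cap \Dcal_p(\bsq)|$ counts the dual-lattice elements lying in the Walsh class \eqref{eqdefLlu}. I would either import this inequality from a statement established earlier in Section~\ref{secvarscrambledpolyrules} or derive it directly from the Walsh representation of $\sigma_{\bsl}^2$ combined with the character orthogonality that characterises $\Dcal_p(\bsq)$ through Definition~\ref{defdualpollatt}.

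The final step is purely combinatorial. Substituting the gain-coefficient bound and collecting the exponents $-2\alpha|\bsl_\uu|_1-|\bsl_\uu|_1 = -(2\alpha+1)|\bsl_\uu|_1$ produces
\begin{equation*}
\sum_{\emptyset\neq\uu\subseteq[s]} \gamma_\uu \left(\frac{b}{b-1}\right)^{|\uu|} \sum_{\bsl_\uu \in \nn^{|\uu|}} N(\bsl_\uu)\, b^{-(2\alpha+1)|\bsl_\uu|_1}.
\end{equation*}
Since the classes $L_{(\bsl_\uu,\bs0)}$ partition $\nn_0^s\setminus\{\bs0\}$ as $\uu$ and $\bsl_\uu$ vary, and each $\bsk \in L_{(\bsl_\uu,\bs0)}$ has $k_j$ with exactly $l_j$ $b$-adic digits for $j \in \uu$, multiplying the factor $\gamma_j b/[(b-1)b^{(2\alpha+1)l_j}] = r_{2\alpha+1,\gamma_j}(k_j)$ over $j \in \uu$ collapses the triple sum into $\sum_{\bsk \in \Dcal_p(\bsq)\setminus\{\bs0\}} r_{2\alpha+1,\bsgamma}(\bsk)$, which is the required upper bound. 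The main obstacle is the gain-coefficient estimate in the middle step: pinning down the constant $b^{|\uu|}/(b-1)^{|\uu|}$ exactly is precisely what forces the weight $r_{2\alpha+1,\bsgamma}$ to appear, and once that identity is in hand the remaining substitution and re-indexing are routine bookkeeping.
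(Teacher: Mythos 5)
Your proof is correct and follows essentially the same route as the paper's. The paper combines the digital-net variance formula from \cite{YH02} (equivalently, Owen's decomposition with the gain coefficients specialized to a digital net, which is exactly your middle step), the dual-lattice characterization via \cite[Lemma~4.40]{N92}, the norm bound from \eqref{defnormalpha}, and the same re-indexing that turns the triple sum over $(\uu,\bsl_\uu,\bsk)$ into the single sum $\sum_{\bsk\in\Dcal_p(\bsq)\setminus\{\bs0\}} r_{2\alpha+1,\bsgamma}(\bsk)$; your phrasing of the gain-coefficient step as an inequality is harmless since for digital nets it is in fact an equality, which is exactly what the paper uses.
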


\begin{proof}
The corollary follows from the following facts: For any $f \in
L_2([0,1]^s)$, let $\hat{I}(f)$ be given by Equation
\eqref{eqdefestimator} and $\left\{ \bsx_0, \dots, \bsx_{b^m-1}
\right\}$ be a digital $(t,m,s)$-net over $\zz_b$ with generating
matrices $C_1,\dots,C_s$ over $\zz_b$, then we have (see \cite{YH02} or also
\cite[Section~13.5]{DP09})
\begin{displaymath}
\Var[ \hat{I} (f) ] = \sum_{\emptyset \neq \uu \subseteq [s]}
\frac{b^{\cardu}}{(b-1)^{\cardu}} \sum_{\bsl_{\uu} \in \nn^{\cardu}}
\frac{ \sigma^2_{(\bsl_{\uu},\bs0)} (f) }{b^{\vert \bsl_{\uu}
\vert_1}} \vert L_{(\bsl_{\uu},\bs0)} \cap \Dcal(C_1,\dots,C_s)
\vert,
\end{displaymath}
where $\Dcal(C_1,\dots,C_s) = \left\{ \bsk \in \nn^s_{0}: C^{\top}_1
{\rm tr}_m(\vec{k}_1) + \dots + C^{\top}_s {\rm tr}_m(\vec{k}_s) = \vec{0}
\right\}$ and $L_{(\bsl_{\uu}, \bs0)}$ is given in Equation
\eqref{eqdefLlu}. Furthermore, if the $C_1,\dots,C_s$ are the
generating matrices of the point set $S_{p,m}(\bsq)$, then for any
$\bsk \in \nn^s_{0} \setminus \left\{ \bs0 \right\}$ we have
\begin{displaymath}
 C^{\top}_1 {\rm tr}_{m}(\vec{k}_1) + \dots + C^{\top}_{s} {\rm tr}_{m}(\vec{k}_s) = \vec{0} \Leftrightarrow {\rm tr}_m(\bsk) \cdot \bsq \equiv 0 \pmod p \, ,
\end{displaymath}
which was first established in \cite[Lemma 4.40]{N92}. Using
\eqref{defnormalpha} and
\begin{equation*}
\sum_{\emptyset \neq \uu \subseteq [s]}
\frac{b^{\cardu}}{(b-1)^{\cardu}} \sum_{\bsl_{\uu} \in \nn^{\cardu}}
\frac{1}{b^{(2\alpha + 1)\vert \bsl_{\uu} \vert_1}} \vert
L_{(\bsl_{\uu},\bs0)} \cap \Dcal_p(\bsq) \vert = \sum_{\bsk \in
\Dcal_{p}(\bsq) \setminus \{\bs0\}} r_{2\alpha + 1, \bsgamma}(\bsk) \, ,
\end{equation*}
the result follows.
\end{proof}

We denote the bound in Corollary \ref{corrboundworstcasevar} by
\begin{equation} \label{eqbound}
B(\bsq, \alpha,\bsgamma) := \sum_{\bsk \in \Dcal_{p}(\bsq) \setminus
\{\bs0\}} r_{2\alpha+1, \bsgamma}(\bsk).
\end{equation}
This bound is almost the same as the square worst case error for
integration in a certain Walsh space considered in \cite{DKPS05},
see in particular \cite[Lemma~4.1]{DKPS05}.

As in \cite{DKPS05}, $B(\bsq,\alpha, \bsgamma)$ can easily be
computed and therefore be used as a quality criterion for polynomial
lattice rules. We write $\log_b$ for the logarithm in base $b$ and
we set $b^{2\alpha \lfloor \log_b 0 \rfloor} = 0$.

\begin{lemma} \label{theocompbound}
Let $B(\bsq, \alpha,\bsgamma)$ be given by Equation \eqref{eqbound}.
Then
\begin{equation} \label{eqformulabound}
 B(\bsq, \alpha,\bsgamma) = - 1+ \frac{1}{b^m} \sum^{b^m-1}_{h=0} \prod^s_{j=1} \left( 1 + \frac{b}{b-1} \gamma_j \phi_\alpha(x_{h,j}) \right),
\end{equation}
where for $x \in [0,1)$ we set
\begin{equation*}
\phi_\alpha(x) = \frac{b-1 - b^{2\alpha \lfloor \log_b x \rfloor
}(b^{2\alpha+1}-1)}{b(b^{2\alpha}-1)}.
\end{equation*}
\end{lemma}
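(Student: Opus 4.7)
The plan is to mimic the derivation of the analogous worst-case-error formula in \cite[Lemma~4.1]{DKPS05}. The key tool is the Walsh-character identity for polynomial lattice point sets: for $\bsk \in \nn_0^s$,
\[
\frac{1}{b^m} \sum_{h=0}^{b^m-1} \wal_{\bsk}(\bsx_h) = \begin{cases} 1 & \text{if } \bsk \in \Dcal_p(\bsq), \\ 0 & \text{otherwise}. \end{cases}
\]
Inserting this into \eqref{eqbound}, interchanging the two sums, and peeling off the $\bsk = \bs0$ term (which contributes $r_{2\alpha+1, \bsgamma}(\bs0) = 1$), I obtain
\[
B(\bsq, \alpha, \bsgamma) = -1 + \frac{1}{b^m} \sum_{h=0}^{b^m-1} \sum_{\bsk \in \nn_0^s} r_{2\alpha+1, \bsgamma}(\bsk)\, \wal_{\bsk}(\bsx_h).
\]

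Because both $r_{2\alpha+1, \bsgamma}$ and $\wal_{\bsk}$ factor across coordinates, the inner sum becomes $\prod_{j=1}^s \bigl(1 + \tfrac{b \gamma_j}{b-1} T_\alpha(x_{h,j})\bigr)$, where $T_\alpha(x) := \sum_{k=1}^\infty b^{-(2\alpha+1) a(k)} \wal_k(x)$ and $a(k) = \lfloor \log_b k \rfloor + 1$ is the number of $b$-adic digits of $k$. The lemma thus reduces to the one-variable identity $T_\alpha(x) = \phi_\alpha(x)$ for all $x \in [0,1)$.

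To verify this identity, I would group the Walsh sum by digit length, writing $T_\alpha(x) = \sum_{a=1}^\infty b^{-(2\alpha+1)a} \sum_{k=b^{a-1}}^{b^a - 1} \wal_k(x)$, and then use the elementary orthogonality identity $\sum_{k=0}^{b^a-1} \wal_k(x) = b^a \I_{[0,b^{-a})}(x)$ to rewrite each block as the telescoping difference $b^a \I_{[0,b^{-a})}(x) - b^{a-1} \I_{[0,b^{-(a-1)})}(x)$. For $x \in (0,1)$ with $b^{-c} \le x < b^{-(c-1)}$, so that $c = -\lfloor \log_b x \rfloor$, only the terms with $a \le c$ survive, leaving a truncated geometric series in $b^{-2\alpha}$. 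Standard algebraic simplification, using $1 - b^{-2\alpha} = (b^{2\alpha}-1)/b^{2\alpha}$ and $b \cdot b^{2\alpha} = b^{2\alpha+1}$, then collapses this series into the claimed closed form for $\phi_\alpha(x)$. The boundary case $x = 0$, where all indicators are $1$ and the series is a single geometric sum from $a=1$, is consistent with the convention $b^{2\alpha \lfloor \log_b 0 \rfloor} = 0$.

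The argument is essentially routine: the only mildly delicate step is the algebraic manipulation that collapses the truncated geometric series into the precise form of $\phi_\alpha$, and no real obstacle is anticipated since the entire nontrivial computation is one-dimensional and parallel to the weighted kernel computation in \cite{DKPS05}.
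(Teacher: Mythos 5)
Your proposal is correct and follows exactly the route the paper points to: the paper gives no inline proof but explicitly appeals to \cite[Lemma~4.1]{DKPS05} and to \cite{B}, and the argument there is the same character-sum-plus-factorization computation you carry out. I checked the one-variable algebra: with $c = -\lfloor \log_b x\rfloor$ for $x\in(0,1)$, the telescoping gives $T_\alpha(x) = \frac{b-1}{b}\sum_{a=1}^{c-1}b^{-2\alpha a} - \frac{1}{b}b^{-2\alpha c}$, whose numerator simplifies to $b-1 - b^{-2\alpha c}(b^{2\alpha+1}-1)$, matching $\phi_\alpha(x)$ exactly, and the $x=0$ case reduces to $\frac{b-1}{b(b^{2\alpha}-1)}$, consistent with the convention $b^{2\alpha\lfloor\log_b 0\rfloor}=0$. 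The interchange of sums is justified by absolute convergence since $2\alpha+1>1$, and the character-sum identity for digital nets together with \cite[Lemma~4.40]{N92} (already invoked in the paper's proof of Corollary~\ref{corrboundworstcasevar}) gives the dual-lattice characterization you need.
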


A detailed proof of this result can be found in \cite{B}.

In the next remark, we show that if we construct a polynomial
lattice rule which achieves optimal convergence rates for functions
in $V_{\alpha,s, \bsgamma}$ for some given $0 < \alpha \le 1$, then
this polynomial lattice rule also achieves optimal convergence rates
for functions in $V_{\alpha',s,\bsgamma'}$ where $\alpha \le \alpha'
\le 1$. This means that the polynomial lattice rule constructed to
achieve optimal convergence rates for functions of smoothness
$\alpha$ adjusts itself to the optimal rate of convergence, as long
as the smoothness $\alpha'$ of the function under consideration
satisfies $\alpha' \geq \alpha$.

\begin{remark} \label{remselfadjust} Assume that for a fixed $\alpha$, $0 < \alpha \leq 1$, we have constructed a polynomial lattice rule $S_{p,m}(\bsq)$ such that
\begin{equation} \label{eqboundselfadjust}
B(\bsq, \alpha,\bsgamma) \leq C_{s, \alpha, \bsgamma,\delta} N^{-(1
+ 2 \alpha) + \delta},
\end{equation}
for all $\delta > 0$, where $C_{s, \alpha, \bsgamma,\delta}$ is permitted to depend on $s, \alpha, \bsgamma$ and $\delta$. We point out that explicit constructions of polynomial lattice rules satisfying Equation \eqref{eqboundselfadjust} are given in Sections \ref{seccbc} and \ref{seckorobov}. It follows immediately from Jensen's inequality, that
\begin{displaymath}
 B(\bsq, \alpha,\bsgamma)^{ \frac{1 + 2 \alpha'}{1 + 2 \alpha} } \geq B(\bsq, \alpha', \bsgamma^{\frac{1+ 2 \alpha'}{1+2 \alpha}}),
\end{displaymath}
for $\alpha \leq \alpha' \leq 1$. Making use of
Assumption~\eqref{eqboundselfadjust}, we conclude that
\begin{displaymath}
 B(\bsq, \alpha', \bsgamma^{\frac{1+ 2 \alpha'}{1 + 2 \alpha}}) \leq C^{\frac{1+2\alpha'}{1+2\alpha}}_{s, \alpha, \bsgamma,\delta} \; N^{-(1 + 2 \alpha') + \delta \frac{1+2\alpha'}{1+2\alpha}},
\end{displaymath}
 for all $\delta >0$. In particular, this observation motivates the construction of
polynomial lattice rules for which $\alpha < 1$, as the resulting
point sets still achieve optimal convergence rates for functions of
bounded variation of order $\alpha'$, where $\alpha \leq \alpha'
\leq 1$.
\end{remark}

\section{Component-By-Component Construction of Polynomial Lattice Rules} \label{seccbc}

In this section, we show how to construct a polynomial lattice rule
using a component-by-component approach so that the bound given in
Equation \eqref{eqbound} converges at a rate of $N^{-1 - 2 \alpha +
\delta}$, for any $\delta > 0$. We remark that in \cite[Theorem
13.24]{DP09}, the corresponding result for digital nets was
presented. A component-by-component (CBC) approach was first
considered in \cite{SR02} in the context of constructing lattice
rules. Subsequently, the CBC algorithm has been applied to the
construction of polynomial lattice rules in \cite{DKPS05}.

We use $R_{b,m}$ to denote the set of all non-zero polynomials in $\zz_b[x]$ with degree at most $m-1$, i.e.
\begin{displaymath}
 R_{b,m} := \left\{ q \in \zz_b[x] : deg(q) < m \textrm{ and } q \neq 0 \right\} \, .
\end{displaymath}
It is clear that $\vert R_{b,m} \vert = b^m-1$ and furthermore it follows from the construction principle described in Subsection \ref{subsecpollattrules} that the polynomials $q_j$ can be restricted to $R_{b,m}$. Algorithm \ref{algcbc1} gives the CBC algorithm.

\begin{algorithm}[h!] \label{algcbc}
\caption{CBC algorithm}
\begin{algorithmic}[1]\label{algcbc1}
\REQUIRE $b$ a prime, $s,m \in \nn$ and weights $\bsgamma =
(\gamma_j)_{j \geq 1}$. \STATE Choose an irreducible polynomial $p
\in \zz_b[x]$, with $\deg(p)=m$. \STATE Set $q_1=1$. \FOR{$d=2$ to
$s$} \STATE find $q_d \in R_{b,m}$ by minimizing $B((q_1,\dots,q_d),
\alpha,\bsgamma)$ as a function of $q_d$. \ENDFOR \RETURN
$\bsq=(q_1,\dots,q_s)$.
\end{algorithmic}
\end{algorithm}

The next theorem shows that Algorithm \ref{algcbc1} indeed
constructs a $\bsq^*_d \in R^d_{b,m}$ so that
$B((q^*_1,\dots,q^*_d),\alpha,\bsgamma)$ converges at a rate of
$N^{-1 - 2 \alpha + \delta}$, for any $\delta >0$.

\begin{theorem} \label{theocbc1} Let $b$ be prime and $p \in \zz_b[x]$ be irreducible, with $deg(p)=m \geq 1$. Suppose $(q^*_1,\dots,q^*_s) \in R^s_{b,m}$ is constructed using Algorithm \ref{algcbc1}. Then for all $d=1,\dots,s$ we have
\begin{displaymath}
B((q^*_1,\dots,q^*_d),\alpha,\bsgamma) \leq \frac{1}{(b^m-1)^{1/
\lambda}} \prod^d_{j=1} \left[ 1 + \gamma^{\lambda}_j
C_{b,\alpha,\lambda} \right]^{1 / \lambda},
\end{displaymath}
for all $\frac{1}{2 \alpha +1} < \lambda \leq 1$ where
\begin{equation}\label{calphablambda}
C_{b,\alpha,\lambda} =
\max\left(\frac{1}{(b^{2\alpha}-1)^{\lambda}},
\frac{(b-1)^{1-\lambda}}{b^{2 \alpha \lambda } - b^{1-\lambda}}
\right).
\end{equation}
\end{theorem}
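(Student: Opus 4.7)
My plan is by induction on the dimension $d$. In the base case $d=1$ with $q_1^*=1$, irreducibility of $p$ forces $\Dcal_p(1) = b^m\nn_0$, and summing the geometric series defining $B((1),\alpha,\bsgamma)$ gives $B((1),\alpha,\bsgamma)^\lambda = \gamma_1^\lambda b^{-m(2\alpha+1)\lambda}/(b^{2\alpha}-1)^\lambda$. The hypothesis $\lambda > 1/(2\alpha+1)$ implies $(b^m-1)b^{-m(2\alpha+1)\lambda} \le 1$, and since $1/(b^{2\alpha}-1)^\lambda$ is one of the two terms inside the $\max$ defining $C_{b,\alpha,\lambda}$, the claimed inequality follows.

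For the inductive step, since $q_d^*$ is chosen by Algorithm \ref{algcbc1} to minimize $B((q_1^*,\ldots,q_{d-1}^*,q_d),\alpha,\bsgamma)$ over $q_d \in R_{b,m}$, and $0 < \lambda \le 1$, the minimum is bounded by the $\ell^\lambda$-average:
\begin{equation*}
B((q_1^*,\ldots,q_d^*),\alpha,\bsgamma)^\lambda \;\le\; \frac{1}{b^m-1}\sum_{q_d \in R_{b,m}} B((q_1^*,\ldots,q_{d-1}^*,q_d),\alpha,\bsgamma)^\lambda.
\end{equation*}
Applying the subadditivity inequality $(\sum a_i)^\lambda \le \sum a_i^\lambda$ (valid for $0 < \lambda \le 1$) to the series defining $B$, and then interchanging summations, reduces the analysis to counting, for each fixed $\bsk \in \nn_0^d \setminus \{\bs0\}$, the number $N(\bsk)$ of $q_d \in R_{b,m}$ satisfying the dual-lattice congruence ${\rm tr}_m(\bsk) \cdot \bsq \equiv 0 \pmod{p}$ with $(q_1^*,\ldots,q_{d-1}^*)$ held fixed.

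The count splits according to whether ${\rm tr}_m(k_d)(x)$ is $\equiv 0$ or $\not\equiv 0$ modulo $p(x)$. Since $\deg({\rm tr}_m(k_d)) < m = \deg(p)$, the ``$\equiv 0$'' case amounts to ${\rm tr}_m(k_d) = 0$, i.e.\ $k_d \in b^m\nn_0$; then the congruence depends only on $(k_1,\ldots,k_{d-1})$ and $(q_1^*,\ldots,q_{d-1}^*)$, and is satisfied either by all $b^m-1$ elements of $R_{b,m}$ or by none. In the complementary case, irreducibility of $p$ makes ${\rm tr}_m(k_d)$ a unit modulo $p$, so the congruence pins $q_d$ down to a single residue class, contributing at most one element of $R_{b,m}$. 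Summing $r_{2\alpha+1,\gamma_d}(k_d)^\lambda$ geometrically over $k_d$ in each regime --- both sums converging precisely under the hypothesis $\lambda > 1/(2\alpha+1)$ --- yields bounds involving the two expressions inside the $\max$ in \eqref{calphablambda}, each of which is dominated by $\gamma_d^\lambda C_{b,\alpha,\lambda}$.

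The main obstacle is the bookkeeping needed to combine the three resulting contributions (from $k_d=0$, from $k_d \in b^m\nn$, and from ${\rm tr}_m(k_d) \ne 0$) with the inductive hypothesis into the desired product structure. This is most cleanly carried out by inducting on the auxiliary quantity $T(\bsq) := \sum_{\bsk \in \Dcal_p(\bsq)} r_{2\alpha+1,\bsgamma}(\bsk)^\lambda$, which dominates $B(\bsq,\alpha,\bsgamma)^\lambda + 1$ by subadditivity; the averaging-and-counting calculation above then shows that $T(\bsq_d^*)$ satisfies a multiplicative recursion whose iterated form produces the factor $\prod_{j=1}^d[1+\gamma_j^\lambda C_{b,\alpha,\lambda}]$, and passing back from $T$ to $B^\lambda$ and taking $\lambda$-th roots yields the stated inequality.
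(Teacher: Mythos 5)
Your overall architecture (minimum bounded by the $\ell^{\lambda}$-average, Jensen subadditivity applied to the defining sum, interchange of sums, splitting the count of admissible $q_d$ on ${\rm tr}_m(k_d)\equiv 0$ versus ${\rm tr}_m(k_d)\not\equiv 0$) is the standard CBC scheme and matches the paper's pointer to the DKPS05 argument; your base case is fine. But your proposed resolution of the bookkeeping, namely to run the induction on $T(\bsq):=\sum_{\bsk\in\Dcal_p(\bsq)} r_{2\alpha+1,\bsgamma}(\bsk)^\lambda$, has a genuine gap. The CBC step chooses $q_d^*$ by minimizing $B(\cdot)$, not $T(\cdot)$. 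Since $T(\bsq_{d-1}^*,q_d) = T(\bsq_{d-1}^*)\bigl(1 + \sum_{k\in b^m\nn}r^\lambda_{2\alpha+1,\gamma_d}(k)\bigr) + \sum_{\bsk:\,{\rm tr}_m(k_d)\ne 0,\ \bsk\in\Dcal_d}r^\lambda(\bsk)$ while $B(\bsq_{d-1}^*,q_d)$ has the analogous decomposition with $r$ in place of $r^\lambda$, the two $q_d$-dependent tails differ, and the $B$-minimizer $q_d^*$ is not in general a $T$-minimizer. Consequently the averaging inequality you invoke, which is available for $B(\bsq_d^*)^\lambda$, does not transfer to $T(\bsq_d^*)$, and the multiplicative recursion you assert for $T(\bsq_d^*)$ is unjustified.

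The way the induction actually closes is different: first write $B(\bsq_{d-1}^*,q_d) = B(\bsq_{d-1}^*) + \theta(q_d)$ with $\theta(q_d) = \sum_{\bsk:\,k_d>0,\ \bsk\in\Dcal_d}r(\bsk)$, and apply $(a+c)^\lambda\le a^\lambda + c^\lambda$ to \emph{this} decomposition, so the inductive hypothesis applies directly to $B(\bsq_{d-1}^*)^\lambda$. It then remains to bound $\frac{1}{b^m-1}\sum_{q_d}\theta(q_d)^\lambda$. After subadditivity and your counting one gets the two contributions
$T_{d-1}\,\mu_m \;+\; \frac{1}{b^m-1}\bigl(F_{d-1}-T_{d-1}\bigr)\,\nu_m$,
where $\mu_m=\sum_{k\in b^m\nn}r^\lambda_{2\alpha+1,\gamma_d}(k)$, $\nu_m=\sum_{k\in\nn,\ {\rm tr}_m(k)\ne 0}r^\lambda_{2\alpha+1,\gamma_d}(k)$, $T_{d-1}=\sum_{\bsk\in\Dcal_{d-1}}r^\lambda(\bsk)$, and $F_{d-1}=\sum_{\bsk\in\nn_0^{d-1}}r^\lambda(\bsk)$; note that when ${\rm tr}_m(k_d)\ne 0$ and $\bsk_{d-1}\in\Dcal_{d-1}$ the unique residue is $q_d\equiv 0$, which lies outside $R_{b,m}$, so those $\bsk$ contribute nothing. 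The quantity $T_{d-1}$ is not controlled by the inductive hypothesis, but it drops out: $\mu_m = b^{-m(2\alpha+1)\lambda}(\mu_m+\nu_m)$ gives $\mu_m = \nu_m/(b^{m(2\alpha+1)\lambda}-1) \le \nu_m/(b^m-1)$ for $\lambda>1/(2\alpha+1)$, so the coefficient of $T_{d-1}$ is nonpositive and the whole sum is at most $\frac{1}{b^m-1}F_{d-1}\nu_m \le \frac{1}{b^m-1}\gamma_d^\lambda C_{b,\alpha,\lambda}\prod_{j<d}(1+\gamma_j^\lambda C_{b,\alpha,\lambda})$, which closes the induction. This cancellation, driven by the same inequality $(b^m-1)b^{-m(2\alpha+1)\lambda}\le 1$ that you correctly used in the base case, is the missing idea in your sketch.
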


A proof of this result can be obtained by making a few modifications
to the proof of \cite[Theorem~4.4]{DKPS05}, which is included in \cite{B}. The additional term
$(b^{2\alpha}-1)^{-\lambda}$ in the definition of
$C_{b,\alpha,\lambda}$ arises from the one-dimensional case for
which we have (we assume $k = \kappa_0 + \kappa_1 b + \cdots +
\kappa_{a-1} b^{a-1}$)
\begin{eqnarray*}
B((1), \alpha, \bsgamma) & = & \gamma_1 \frac{b}{b-1} \sum_{k=1, b^m
| k}^\infty b^{-\alpha a} \\ &  = & \gamma_1 \frac{b}{b-1}
\sum_{l=m+1}^\infty (b-1) b^{l-m-1} b^{-(2\alpha+1) l} \\ & = &
\frac{1}{b^{(2\alpha+1)m}} \frac{\gamma_1}{b^{2\alpha}-1} \\ & \le &
\frac{1}{(b^m-1)^{1/\lambda}} [1 + \gamma_1^\lambda
(b^{2\alpha}-1)^{-\lambda}]^{1/\lambda},
\end{eqnarray*}
for all $\frac{1}{2\alpha+1} < \lambda \le 1$. The induction with
respect to the dimension can be carried out as in the proof of
\cite[Theorem~4.4]{DKPS05}.

The next result discusses the tractability of Algorithm
\ref{algcbc1}.
\begin{corollary} \label{cortraccbc}
Let $b$ be prime, $p \in \zz_b[x]$ be irreducible with $deg(p)=m
\geq 1$ and $N=b^m$. Suppose $\bsq^*_s \in R^s_{b,m}$ is constructed
using Algorithm \ref{algcbc1}. Then we have the following:
\begin{enumerate}
 \item
\begin{displaymath}
B(\bsq^*_s, \alpha,\bsgamma) \leq c_{s, \alpha,\bsgamma, \delta} \;
(N-1)^{-(2 \alpha +1) + \delta} \, , \textrm{ for all } 0 < \delta
\leq 2 \alpha,
\end{displaymath}
where \begin{displaymath} c_{s,\alpha,\bsgamma,\delta} =
\prod^s_{j=1} \left[ 1 + \gamma^{\frac{1}{2 \alpha +1 - \delta}}_j
C_{b,\alpha,(2\alpha+1-\delta)^{-1}} \right]^{2 \alpha +1 - \delta}
\,.
\end{displaymath}
\item Assume
\begin{equation} \label{eqstrongtracassum}
\sum^{\infty}_{j=1} \gamma^{\frac{1}{2 \alpha +1 - \delta}}_j <
\infty.
\end{equation}

Then $c_{s, \alpha,\bsgamma, \delta} \leq c_{\infty,
\alpha,\bsgamma, \delta} < \infty$ and we have
\begin{displaymath}
B(\bsq^*_s , \alpha,\bsgamma) \leq c_{\infty, \alpha,\bsgamma,
\delta} (N-1)^{-(2 \alpha +1) + \delta} \, , \textrm{ for all } 0 <
\delta \leq 2 \alpha .
\end{displaymath}
Thus the bound $B(\bsq^*_s , \alpha,\bsgamma)$ is bounded
independently of the dimension.
\item
Under the assumption
\begin{displaymath}
A:= \lim \sup_{s \rightarrow \infty} \frac{\sum^s_{j=1} \gamma_j}{s}
< \infty,
\end{displaymath}
we obtain $ c_{s, \alpha,\bsgamma, 2 \alpha} \leq \tilde{c}_{\eta}
(b-1)^{2 \alpha} s^{\frac{A + \eta}{b^{2 \alpha} -1} }$ and
therefore
\begin{displaymath}
B(\bsq^*_s, \alpha,\bsgamma) \leq \tilde{c}_{\eta} s^{\frac{A +
\eta}{b^{2 \alpha} -1} } (N-1)^{-1}
\end{displaymath}
for all $\eta >0 $, where the constant $\tilde{c}_{\eta}$ only
depends on $\eta$. Thus the bound $B(\bsq^*_s , \alpha,\bsgamma)$
satisfies a bound which depends only polynomially on the dimension.
\end{enumerate}
\end{corollary}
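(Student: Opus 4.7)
The plan is to deduce all three parts directly from Theorem~\ref{theocbc1}, which supplies, for every $\lambda \in (1/(2\alpha+1),1]$, the master inequality
\begin{equation*}
B(\bsq^*_s,\alpha,\bsgamma) \le (b^m-1)^{-1/\lambda} \prod_{j=1}^s \bigl[1+\gamma_j^\lambda C_{b,\alpha,\lambda}\bigr]^{1/\lambda}.
\end{equation*}
For part (i), I would choose $\lambda = 1/(2\alpha+1-\delta)$. For $\delta \in (0,2\alpha]$ this lies in the admissible range $(1/(2\alpha+1),1]$, and then $(b^m-1)^{-1/\lambda}$ is precisely $(N-1)^{-(2\alpha+1)+\delta}$ while the remaining product matches the definition of $c_{s,\alpha,\bsgamma,\delta}$ term by term. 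Part (i) is thus a direct substitution.

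For part (ii) I would linearize by logarithm and apply $\log(1+x)\le x$ to obtain
\begin{equation*}
\log c_{s,\alpha,\bsgamma,\delta} \;\le\; (2\alpha+1-\delta)\,C_{b,\alpha,(2\alpha+1-\delta)^{-1}} \sum_{j=1}^s \gamma_j^{1/(2\alpha+1-\delta)}.
\end{equation*}
Under the summability hypothesis \eqref{eqstrongtracassum} the right-hand side is finite and independent of $s$; exponentiating produces an $s$-uniform upper bound $c_{\infty,\alpha,\bsgamma,\delta}$, which combined with part (i) delivers strong tractability.

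For part (iii) I would specialize to $\delta = 2\alpha$, so $\lambda=1$. A direct inspection of \eqref{calphablambda} shows that both arguments of the maximum coincide and equal $1/(b^{2\alpha}-1)$; hence
\begin{equation*}
c_{s,\alpha,\bsgamma,2\alpha} \;=\; \prod_{j=1}^s\!\left(1+\frac{\gamma_j}{b^{2\alpha}-1}\right),\qquad \log c_{s,\alpha,\bsgamma,2\alpha} \;\le\; \frac{1}{b^{2\alpha}-1}\sum_{j=1}^s \gamma_j,
\end{equation*}
again by $\log(1+x)\le x$. The final step is then to invoke the Cesaro-type hypothesis on $\sum_{j=1}^s \gamma_j$ to convert this into the claimed polynomial-in-$s$ bound with exponent $(A+\eta)/(b^{2\alpha}-1)$, absorbing the factor $(b-1)^{2\alpha}$ and the slack in the $\limsup$ into the constant $\tilde c_\eta$.

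The main obstacle, I anticipate, is precisely this last conversion in part~(iii): parts (i) and (ii) are essentially mechanical once the master inequality from Theorem~\ref{theocbc1} is in hand, but extracting the stated exponent from the Cesaro average and cleanly packaging the remaining constants and prefactors into a single $\eta$-dependent $\tilde c_\eta$ requires the most careful bookkeeping of the three.
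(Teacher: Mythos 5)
Your reductions of parts (i) and (ii) to Theorem~\ref{theocbc1} are exactly what the paper intends (and what the analogous argument in the cited reference does): substituting $\lambda = (2\alpha+1-\delta)^{-1}$ — admissible precisely for $0<\delta\le 2\alpha$ — turns the factor $(b^m-1)^{-1/\lambda}$ into $(N-1)^{-(2\alpha+1)+\delta}$ and the product into $c_{s,\alpha,\bsgamma,\delta}$, and the $\log(1+x)\le x$ bound shows $\sup_s c_{s,\alpha,\bsgamma,\delta}<\infty$ whenever \eqref{eqstrongtracassum} holds. Those two parts are complete.

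Part (iii) is where you should slow down, because the step you wave at — converting the Ces\`aro hypothesis into the stated polynomial bound — actually fails if the hypothesis is read literally. Your own inequality reads
\begin{equation*}
\log c_{s,\alpha,\bsgamma,2\alpha} \;\le\; \frac{1}{b^{2\alpha}-1}\sum_{j=1}^s\gamma_j,
\end{equation*}
and the hypothesis as printed, $\limsup_{s\to\infty}\bigl(\sum_{j=1}^s\gamma_j\bigr)/s = A$, only guarantees $\sum_{j=1}^s\gamma_j\le(A+\eta)s$ for $s$ large, which yields $c_{s,\alpha,\bsgamma,2\alpha}\le\exp\!\bigl((A+\eta)s/(b^{2\alpha}-1)\bigr)$ — exponential in $s$, not the claimed polynomial $s^{(A+\eta)/(b^{2\alpha}-1)}$. (Concretely, with $\gamma_j\equiv A>0$ the product $\prod_{j=1}^s\bigl(1+A/(b^{2\alpha}-1)\bigr)$ grows geometrically.) The denominator in the $\limsup$ must be $\log s$, as it is in the companion Corollary~\ref{cortrackorobov}; with $\sum_{j=1}^s\gamma_j\le(A+\eta)\log s$ for $s\ge s_0(\eta)$ your inequality gives exactly $c_{s,\alpha,\bsgamma,2\alpha}\le s^{(A+\eta)/(b^{2\alpha}-1)}$ for such $s$, and absorbing the finitely many initial indices (and the harmless $(b-1)^{2\alpha}$ factor) into $\tilde c_\eta$ finishes the proof. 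You correctly flagged this as the delicate step, but you should have carried it out and, in doing so, caught that the hypothesis as stated does not support the conclusion.
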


The proof is similar to the proof of \cite[Corollary 4.5]{DKPS05} and can be found in \cite{B}.

\section{Construction of Korobov Polynomial Lattice Rules} \label{seckorobov}

In this section, we construct Korobov polynomial lattice rules. The
ideas underlying this algorithm stem from the construction of
lattice rules, see \cite{K60}. We remark that the construction of
Korobov polynomial lattice rules has been examined in \cite{DKPS05},
see also \cite{LLNS96}. We denote the generating vector for the
Korobov polynomial lattice rule by $\psi(q)=(1,q,\dots,q^{s-1})
\pmod p$. As in Section \ref{seccbc}, we work with the bound
$B(\psi(q), \alpha, \bsgamma)$ and now state the algorithm showing
how to construct Korobov polynomial lattice rules.

\begin{algorithm}[h!] \label{algcbck}
\caption{Korobov algorithm}
\begin{algorithmic}[1]\label{algcbck1}
\REQUIRE $b$ a prime, $s,m \in \nn$ and weights $\bsgamma =
(\gamma_j)_{j \geq 1}$. \STATE Choose an irreducible polynomial $p
\in \zz_b[x]$, with $\deg(p)=m$. \STATE Find $q^* \in R_{b,m}$ by
minimizing $B(\psi(q), \alpha, \bsgamma)$.
\end{algorithmic}
\end{algorithm}

We obtain the following bound for $B(\psi(q^*), \alpha,\bsgamma)$,
where $q^*$ is constructed using Algorithm \ref{algcbck1}. The proof
of the following theorem can be obtained by making a few modifications
to the proof of \cite[Theorem~4.7]{DKPS05}, which are presented in \cite{B}.

\begin{theorem} \label{theokorbound} Let $b$ be prime, $s \geq 2$ and let $p \in \zz_b[x]$ be irreducible with $deg(p)=m \geq 1$. A minimizer $q^*$ obtained from Algorithm \ref{algcbck1} satisfies
\begin{displaymath}
 B( \psi(q^*), \alpha, \bsgamma) \leq  \frac{s^{1 / \lambda} }{(b^{m}-1)^{1 / \lambda}} \prod^s_{j=1} \left( 1 + \gamma^{\lambda}_j C_{b,\alpha,\lambda} \right)^{1 / \lambda},
\end{displaymath}
for all $\frac{1}{2 \alpha} < \lambda \leq 1$, where
$C_{b,\alpha,\lambda} > 0$ is given by \eqref{calphablambda}.
\end{theorem}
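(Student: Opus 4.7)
The plan is to follow the classical Korobov averaging argument, adapted to the polynomial-lattice setting along the lines of the proof of Theorem~\ref{theocbc1}. First I would use the expansion
\begin{equation*}
B(\psi(q), \alpha, \bsgamma) = \sum_{\bsk \in \Dcal_p(\psi(q)) \setminus \{\bs0\}} r_{2\alpha+1, \bsgamma}(\bsk)
\end{equation*}
from \eqref{eqbound} together with the elementary inequality $(\sum_i a_i)^\lambda \leq \sum_i a_i^\lambda$, valid for $a_i \geq 0$ and $0 < \lambda \leq 1$. Since Algorithm~\ref{algcbck1} returns a minimizer,
\begin{equation*}
B(\psi(q^*), \alpha, \bsgamma)^\lambda \;\leq\; \frac{1}{b^m - 1} \sum_{q \in R_{b,m}} B(\psi(q), \alpha, \bsgamma)^\lambda \;\leq\; \frac{1}{b^m - 1} \sum_{\bsk \in \nn_0^s \setminus \{\bs0\}} r_{2\alpha+1, \bsgamma}(\bsk)^\lambda \, N(\bsk),
\end{equation*}
after interchanging the order of summation, where $N(\bsk) := \#\{ q \in R_{b,m} : \bsk \in \Dcal_p(\psi(q))\}$.

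The next step is to bound $N(\bsk)$ using the irreducibility of $p$. By Definition~\ref{defdualpollatt}, $\bsk \in \Dcal_p(\psi(q))$ amounts to the congruence
\begin{equation*}
{\rm tr}_m(k_1)(x) + {\rm tr}_m(k_2)(x)\, q + \cdots + {\rm tr}_m(k_s)(x)\, q^{s-1} \equiv 0 \pmod{p(x)}.
\end{equation*}
Working in the field $F := \zz_b[x]/(p(x))$, whose existence relies on $p$ being irreducible, the left-hand side is a polynomial in $q$ of degree at most $s-1$. Hence it vanishes on at most $s-1 \leq s$ elements of $F$ unless it is identically the zero polynomial over $F$. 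Because $\deg({\rm tr}_m(k_j)) < m = \deg(p)$, the identically-zero case occurs precisely when every $k_j$ is a multiple of $b^m$, and then $N(\bsk) = b^m - 1$. This leads to
\begin{equation*}
B(\psi(q^*), \alpha, \bsgamma)^\lambda \;\leq\; \frac{s-1}{b^m - 1}\, \Sigma_1 \;+\; \Sigma_2,
\end{equation*}
where $\Sigma_1$ sums $r_{2\alpha+1,\bsgamma}(\bsk)^\lambda$ over $\bsk \neq \bs0$ with at least one $k_j$ \emph{not} a multiple of $b^m$, and $\Sigma_2$ is the same sum restricted to those $\bsk \neq \bs0$ all of whose components are multiples of $b^m$.

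Both $\Sigma_1$ and $\Sigma_2$ would then be estimated via the product structure of $r_{2\alpha+1,\bsgamma}$ and the one-dimensional series
\begin{equation*}
\sum_{k \geq 1} \left( \frac{b}{(b-1) b^{(2\alpha+1) a(k)}} \right)^\lambda = \frac{(b-1)^{1-\lambda}}{b^{2\alpha \lambda} - b^{1-\lambda}},
\end{equation*}
where $a(k)$ denotes the number of base-$b$ digits of $k$; together with the term $(b^{2\alpha}-1)^{-\lambda}$ arising from the purely one-dimensional computation after Theorem~\ref{theocbc1}, this produces the constant $C_{b,\alpha,\lambda}$ of \eqref{calphablambda}. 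Collecting the contributions yields $\Sigma_1 + \Sigma_2 \leq \prod_{j=1}^s (1 + \gamma_j^\lambda C_{b,\alpha,\lambda}) - 1$; bounding the overall expression by $\tfrac{s}{b^m - 1}\prod_{j=1}^s (1 + \gamma_j^\lambda C_{b,\alpha,\lambda})$ and taking $\lambda$-th roots then gives the statement.

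The main obstacle is the bookkeeping that absorbs the resonance sum $\Sigma_2$ into the clean product bound despite the large counting $N(\bsk) = b^m - 1$ in that regime. The smallness of each summand in $\Sigma_2$---forced by $k_j \geq b^m$ on every nonzero coordinate, which contributes an extra factor of $b^{-(2\alpha+1)m\lambda}$ per such component---must outweigh the $b^m - 1$ prefactor, and it is precisely at this step that the restriction $\lambda > 1/(2\alpha)$ enters. By contrast, the CBC construction of Theorem~\ref{theocbc1} sidesteps the resonance issue by fixing $q_1 = 1$ and extending one component at a time, which is why that proof succeeds under the weaker condition $\lambda > 1/(2\alpha+1)$.
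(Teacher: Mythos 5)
Your proposal is essentially the same Korobov averaging argument that the paper references (the proof it points to is \cite[Theorem~4.7]{DKPS05}, suitably modified as in \cite{B}): Jensen's inequality $(\sum a_i)^\lambda \le \sum a_i^\lambda$, averaging over $q\in R_{b,m}$, interchanging summation to isolate $N(\bsk) = \#\{q: \bsk\in\Dcal_p(\psi(q))\}$, and the field-theoretic root count showing $N(\bsk)\le s-1$ unless every ${\rm tr}_m(k_j)$ vanishes, in which case $N(\bsk)=b^m-1$ but each nonzero $k_j\ge b^m$ forces strong decay. The splitting into $\Sigma_1$ and $\Sigma_2$, the closed form $\sum_{k\ge 1}\bigl(\tfrac{b}{(b-1)b^{(2\alpha+1)a(k)}}\bigr)^\lambda = \tfrac{(b-1)^{1-\lambda}}{b^{2\alpha\lambda}-b^{1-\lambda}}$, and the final collection into $\tfrac{s}{b^m-1}\prod_j(1+\gamma_j^\lambda C_{b,\alpha,\lambda})$ are all the right steps.

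Two small points of bookkeeping are worth flagging. First, your own accounting of $\Sigma_2$ is actually sharper than you claim: since every nonzero coordinate contributes a factor $b^{-(2\alpha+1)m\lambda}$, the resonance sum satisfies $\Sigma_2 \le b^{-(2\alpha+1)m\lambda}\bigl[\prod_j(1+\gamma_j^\lambda c_\lambda)-1\bigr]$ with $c_\lambda = \tfrac{(b-1)^{1-\lambda}}{b^{2\alpha\lambda}-b^{1-\lambda}}$, and $(b^m-1)\,b^{-(2\alpha+1)m\lambda}\le 1$ already for $(2\alpha+1)\lambda\ge 1$. So the step you identify as the source of the constraint $\lambda>1/(2\alpha)$ in fact only requires $\lambda>1/(2\alpha+1)$; the wider range $1/(2\alpha)$ in the statement is inherited from \cite[Theorem~4.7]{DKPS05} and is not forced by this decomposition (and Corollary~\ref{cortrackorobov} implicitly uses $\lambda$ below $1/(2\alpha)$ for small $\delta$, consistent with your sharper argument). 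Second, the term $(b^{2\alpha}-1)^{-\lambda}$ in $C_{b,\alpha,\lambda}$ arises specifically from the CBC base case $q_1=1$; in the Korobov averaging (which starts at $s\ge 2$) only the second argument of the max is used, and $C_{b,\alpha,\lambda}$ appears in the final bound merely because $c_\lambda\le C_{b,\alpha,\lambda}$, giving a uniform notation with Theorem~\ref{theocbc1}. Neither point affects the validity of your proof for the stated range of $\lambda$.
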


We point out that the bounds in Theorems \ref{theocbc1} and \ref{theokorbound} only differ by the additional factor $s^{1 / \lambda}$. We remark that the same observation was made in \cite{DKPS05} and is also known from the lattice rule case. This leads to the conclusion that the Korobov construction is inferior to the component-by-component construction.

In the next corollary, we discuss the tractability of Algorithm \ref{algcbck1}.

\begin{corollary} \label{cortrackorobov} Let $b$ be prime, $s \geq 2$, $p \in \zz_b[x]$ be irreducible with $deg(p)=m \geq 1$ and $N=b^m$. Suppose $q^* \in R_{b,m}$ is constructed using Algorithm \ref{algcbck1}. Then we have the following:
\begin{enumerate}
 \item \begin{displaymath}
B(\psi(q^*), \alpha,\bsgamma) \leq c_{s,\alpha,\bsgamma,s} s^{2
\alpha +1 - \delta} (N-1)^{-(2 \alpha +1) + \delta} \, \textrm{, for
all } 0 < \delta \leq 2 \alpha,
       \end{displaymath}
where
\begin{displaymath}
 c_{s, \alpha,\bsgamma,\delta} = \prod^s_{j=1} \left( 1 + \gamma_j^{\frac{1}{2 \alpha +1 - \delta}} C_{b,\alpha,(2\alpha+1-\delta)^{-1}} \right)^{2 \alpha +1 - \delta}.
\end{displaymath}
 \item Under the assumption
\begin{displaymath}
 A:= \lim sup_{s \rightarrow \infty} \frac{\sum^s_{j=1} \gamma_j }{\log s} < \infty
\end{displaymath}
we obtain
\begin{displaymath}
 c_{s, \alpha,\bsgamma, 2 \alpha} \leq \tilde{c}_{\eta} s^{ \frac{A + \eta}{b^{2 \alpha} -1} }
\end{displaymath}
and therefore
\begin{displaymath}
 B( \psi(q^*), \alpha, \bsgamma) \leq \tilde{c}_{\eta} s^{1 + \frac{A + \eta}{b^{2 \alpha} -1}} (N-1)^{-1},
\end{displaymath}
for all $\eta>0$, where the constant $\tilde{c}_{\eta}$ only depends
on $\eta$. Thus the bound $B( \psi(q^*), \alpha,\bsgamma)$ satisfies
a bound which depends only polynomially on the dimension.
\end{enumerate}
\end{corollary}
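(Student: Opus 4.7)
The plan is to derive both parts directly from Theorem \ref{theokorbound} in close analogy with the proof of Corollary \ref{cortraccbc}, with the only substantive change being the bookkeeping for the extra $s^{1/\lambda}$ factor produced by the Korobov construction.

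For Part 1, I would set $\lambda = 1/(2\alpha+1-\delta)$ in Theorem \ref{theokorbound}, so that $1/\lambda = 2\alpha+1-\delta$ matches the target exponent. Substituting this choice into the conclusion of Theorem \ref{theokorbound} yields
\begin{displaymath}
B(\psi(q^*), \alpha, \bsgamma) \le \frac{s^{2\alpha+1-\delta}}{(N-1)^{2\alpha+1-\delta}} \prod_{j=1}^s \left(1 + \gamma_j^{1/(2\alpha+1-\delta)} C_{b,\alpha,(2\alpha+1-\delta)^{-1}}\right)^{2\alpha+1-\delta},
\end{displaymath}
where the product is exactly the quantity $c_{s,\alpha,\bsgamma,\delta}$ from the statement. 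Rewriting $(N-1)^{-(2\alpha+1-\delta)} = (N-1)^{-(2\alpha+1)+\delta}$ gives the claim. One has to check that for each admissible $\delta$ the chosen $\lambda$ lies in the range required by Theorem \ref{theokorbound}.

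For Part 2, I would specialize Part 1 to $\delta = 2\alpha$, so that $\lambda = 1$. From \eqref{calphablambda} one computes $C_{b,\alpha,1} = 1/(b^{2\alpha}-1)$, whence
\begin{displaymath}
c_{s,\alpha,\bsgamma,2\alpha} = \prod_{j=1}^s \left(1 + \frac{\gamma_j}{b^{2\alpha}-1}\right).
\end{displaymath}
Taking logarithms and applying $\log(1+x) \le x$ gives
\begin{displaymath}
\log c_{s,\alpha,\bsgamma,2\alpha} \le \frac{1}{b^{2\alpha}-1}\sum_{j=1}^s \gamma_j.
\end{displaymath}
Under the hypothesis $A = \limsup_{s \to \infty} \sum_{j=1}^s \gamma_j / \log s < \infty$, for every $\eta > 0$ there is an $s_0$ such that $\sum_{j=1}^s \gamma_j \le (A+\eta) \log s$ for all $s \ge s_0$, so $c_{s,\alpha,\bsgamma,2\alpha} \le s^{(A+\eta)/(b^{2\alpha}-1)}$ for $s \ge s_0$. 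The finitely many small values of $s$ are absorbed into a constant $\tilde{c}_\eta$ depending only on $\eta$. Substituting back into Part 1 at $\delta = 2\alpha$, where the prefactor $s^{2\alpha+1-\delta}$ becomes $s^1$, produces the stated bound $B(\psi(q^*),\alpha,\bsgamma) \le \tilde{c}_\eta\, s^{1 + (A+\eta)/(b^{2\alpha}-1)} (N-1)^{-1}$.

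The main obstacle is purely bookkeeping: ensuring the chosen $\lambda$ is admissible in Theorem \ref{theokorbound} for the relevant $\delta$, and cleanly absorbing the finite exceptional dimensions into the constant $\tilde{c}_\eta$ when applying the $\limsup$ hypothesis. No new analytic ideas are needed beyond those already used for the component-by-component construction in Corollary \ref{cortraccbc}; the extra factor $s^{1/\lambda}$ simply propagates through to become the additional $s$-factor in front of the $(N-1)^{-1}$ term in the final bound.
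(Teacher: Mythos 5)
Your approach is correct and is essentially the standard derivation from Theorem \ref{theokorbound}; the paper itself offers no proof, deferring to \cite{DKPS05} and \cite{B}, and what you outline is exactly what that proof would be. The one place where you defer the work --- ``one has to check that for each admissible $\delta$ the chosen $\lambda$ lies in the range required by Theorem \ref{theokorbound}'' --- is where the only real subtlety sits, and if you carry out that check against the theorem as printed, it fails: Theorem \ref{theokorbound} states the range $\frac{1}{2\alpha} < \lambda \le 1$, and your choice $\lambda = (2\alpha+1-\delta)^{-1}$ satisfies $\lambda > \frac{1}{2\alpha}$ only when $\delta > 1$, which leaves no admissible $\delta$ at all for $\alpha \le 1/2$; likewise in Part 2 the choice $\lambda = 1$ satisfies $1 > \frac{1}{2\alpha}$ only when $\alpha > 1/2$. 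Taking the theorem at face value, the corollary's stated range $0 < \delta \le 2\alpha$ cannot be reached.

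The resolution is that the lower endpoint $\frac{1}{2\alpha}$ in Theorem \ref{theokorbound} must be a misprint for $\frac{1}{2\alpha+1}$: this is the range appearing in Theorem \ref{theocbc1}, it is exactly the range on which $C_{b,\alpha,\lambda}$ in \eqref{calphablambda} is finite and positive (one needs $\lambda(2\alpha+1) > 1$ for $b^{2\alpha\lambda} - b^{1-\lambda}>0$), and it is what a direct translation of \cite[Theorem~4.7]{DKPS05} gives once the smoothness parameter there is identified with $2\alpha+1$. Under that reading your chosen $\lambda$ is admissible throughout $0 < \delta \le 2\alpha$, and the rest of your argument --- the substitution for Part 1, the evaluation $C_{b,\alpha,1} = 1/(b^{2\alpha}-1)$, the use of $\log(1+x) \le x$, and the absorption of the finitely many small $s$ into $\tilde{c}_\eta$ via the $\limsup$ hypothesis --- is correct as written. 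You also implicitly and correctly read $c_{s,\alpha,\bsgamma,s}$ in Part~1 of the statement as a typo for $c_{s,\alpha,\bsgamma,\delta}$.
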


The proof is again similar to the proof of
Corollary~\cite[Corollary~4.8]{DKPS05} and can be found in \cite{B}.

\section{A Lower Bound on the Worst-Case Variance} \label{seclowerboundwcv}

In this section, we produce a lower bound on the worst-case variance
discussed in Section~\ref{secvarscrambledpolyrules}. As we rely on
\cite[Section 2.2.4, Proposition 1]{N88} to establish the result,
the class of algorithms to which our result applies is the same as
the class considered there. We now recall the definition of this class. Following \cite[Section
1.1]{N88}, we use the notation
\begin{displaymath}
 S(f) = \int_{[0,1]^s} f(\bsx) d \bsx,
\end{displaymath}
for $f \in V_{\alpha, s, \bsgamma}$ and consider approximating $S :
V_{\alpha,s,\bsgamma} \rightarrow \rr$ using a mapping $\tilde{S} :
V_{\alpha,s,\bsgamma} \rightarrow \rr$. As in \cite[Section
1.1]{N88}, we assume that in general, the function $f \in
V_{\alpha,s,\bsgamma}$ is not known, but we have some information on
$f$ available, which is denoted by $L$, where $L:
V_{\alpha,s,\bsgamma} \rightarrow H$ and an approximation $\tilde{S}
: V_{\alpha,s,\bsgamma} \rightarrow \rr$ only uses the information
$L$ if it can be written as follows $\tilde{S} = \varphi \circ L$,
where $\varphi : H \rightarrow \rr$ is a an arbitrary mapping, referred
to as an (idealized) algorithm in \cite{N88}. In particular, we
allow our approximation nodes to be chosen adaptively and define the
following information operator:
\begin{eqnarray*}
I_N & = & \left\{ L : V_{\alpha,s,\bsgamma} \rightarrow \rr^N
\vert L(f) = (f(\bsa_1), f(\bsa_2[f(\bsa_1)]), \dots, f(\bsa_N [ f(\bsa_1), \dots,
f(\bsa_{N-1}) ] ) ) \right. ,
\\ && \left. \textrm{ where } \bsa_1 \in [0,1]^s \mbox{ and } \bsa_i : \rr^{i-1} \rightarrow [0,1]^s \mbox{ for } i = 2, \ldots, s \right\}
\end{eqnarray*}
and we can now introduce the class of all approximations considered
in this section:
\begin{displaymath}
A_{N} = \left\{ \tilde{S} : V_{\alpha,s,\bsgamma} \rightarrow \rr \vert \tilde{S} = \varphi \circ L \textrm{  with } \varphi : \rr^N \rightarrow \rr \textrm{ and } L \in I_N \right\}.
\end{displaymath}
We remark that non-adaptive algorithms are of course included in
$A_N$, consider $\tilde{S} = \varphi \circ \overline{L}$, where
$\overline{L}(f) = ( f(\bsa_1), \dots f(\bsa_N)  )$. Now, following
\cite[Section 2.1]{N88}, we can define the randomized algorithms
considered in this paper, referred to as generalized Monte Carlo
methods in \cite{N88}: A random variable $Q=( Q ( \omega))_{ \omega
\in \Omega}$ is called a randomized algorithm in $A_N$ if $(\Omega, B
, \mu)$ is a probability space and $Q(\omega) \in A_N$ for all $\omega
\in \Omega$. The set of all randomized algorithms is denoted by
$^*C(A_N)$, hence randomly scrambled nets (and therefore polynomial lattice rules) are also included in this set. We now present the lower bound on the worst-case
variance, which applies to all randomized algorithms in $^*C(A_N)$.

\begin{theorem} \label{theolowboundvar}
Let $^*C(A_N)$, $V_{\alpha,s,\bsgamma}$ be defined as above. Then
\begin{displaymath}
\inf_{Q \in ^*C(A_N)} \sup_{f \in V_{\alpha,s,\bsgamma}} \Var(\hat{I}(f,Q)) \geq \tilde{C} N^{- 2\alpha - 1},
\end{displaymath}
for some constant $\tilde{C}$ independent of $N$ where $$\Var(\hat{I}(f,Q)) = \int_{\Omega} \left[\hat{I}(f,Q(\omega)) - \int_\Omega \hat{I}(f,Q(\omega')) {\rm d} \mu(\omega') \right]^2 {\rm d} \mu(\omega).$$
\end{theorem}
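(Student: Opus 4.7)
The plan is to reduce the randomized lower bound to a Bakhvalov-type averaging argument by invoking \cite[Section~2.2.4, Proposition~1]{N88}, and then exhibit a suitable family of ``fooling'' functions in $V_{\alpha,s,\bsgamma}$. Specifically, it suffices to produce a finite parametrized family $\{f_\epsilon\}_{\epsilon\in\{-1,+1\}^M}$ with $M\approx 2N$, each $f_\epsilon$ in (a dilation of) the unit ball of $V_{\alpha,s,\bsgamma}$, such that the $\epsilon$-average of $\mathbb{E}_\omega|\hat I(f_\epsilon,Q(\omega))-S(f_\epsilon)|^2$ is at least of order $N^{-2\alpha-1}$ for every $Q\in {}^{*}C(A_N)$.

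To build the family, I would take $M$ equal to the smallest power of $b$ with $M\ge 2N$, partition $[0,1)^s$ into $M$ congruent $b$-adic subcubes $C_1,\ldots,C_M$ of side length $\eta = M^{-1/s}$, and on each cube place a translated copy $\psi_i(\bsx)=h\,\phi((\bsx-\bsx_i)/\eta)$ of a fixed smooth bump $\phi$ supported in $[0,1]^s$ with $\int\phi=c_\phi>0$ and finite $V_\alpha(\phi)$. A short scaling calculation gives the dilation identity $V_\alpha(\phi(\cdot/\eta))=\eta^{s(1/2-\alpha)}V_\alpha(\phi)$, and combining this with the partition-based disjoint-support decomposition $V_\alpha(f_\epsilon)^2=\sum_{i=1}^M V_\alpha(\psi_i)^2$ yields $V_\alpha(f_\epsilon)=\sqrt M\,h\,\eta^{s(1/2-\alpha)}$. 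Choosing $h=M^{-\alpha}$ then forces $V_\alpha(f_\epsilon)=V_\alpha(\phi)$ uniformly in $\epsilon$, and Corollary~\ref{corHoeldcontmodcont} bounds $\|f_\epsilon\|_\alpha$ by a constant $C_{s,\alpha,\bsgamma}$ independent of $\epsilon$; with this choice each $\int\psi_i=c_\phi h\eta^s=c_\phi M^{-\alpha-1}$.

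For the averaging, fix $Q\in {}^{*}C(A_N)$ and a realization $\omega$ of its randomness. Running $Q(\omega)$ on $f_\epsilon$ can probe at most $N$ of the cubes; let $I(\omega,\epsilon)\subseteq\{1,\ldots,M\}$ be the visited indices. Disjointness of the supports forces both the query sequence and the output $\hat I(f_\epsilon,Q(\omega))$ to depend on $\epsilon$ only through $(\epsilon_i)_{i\in I}$, whereas $S(f_\epsilon)=\sum_i\epsilon_i\int\psi_i$ depends on all coordinates. Averaging over uniform $\epsilon$ and peeling off the unused coordinates via the elementary identity $\mathbb{E}_{\epsilon_j\in\{\pm 1\}}(A-c\epsilon_j)^2=A^2+c^2$ gives
\[
\mathbb{E}_\epsilon\mathbb{E}_\omega\bigl|\hat I(f_\epsilon,Q(\omega))-S(f_\epsilon)\bigr|^2\;\ge\;c_\phi^2\, M^{-2\alpha-2}\,\mathbb{E}_{\omega,\epsilon}\bigl[M-|I(\omega,\epsilon)|\bigr]\;\ge\;c_\phi^2(M-N)M^{-2\alpha-2},
\]
which with $M\approx 2N$ is at least $\tilde C N^{-2\alpha-1}$. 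Some $\epsilon^{*}$ realizes this bound on a specific $f_{\epsilon^{*}}$ (which lies in the unit ball of $V_{\alpha,s,\bsgamma}$ after dividing through by $C_{s,\alpha,\bsgamma}$, at the cost of a constant factor), and Novak's reduction converts the resulting mean-squared-error bound into the variance formulation appearing in the statement.

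The main technical obstacle is the norm bookkeeping: one needs $\|f_\epsilon\|_\alpha$ uniformly bounded in $\epsilon$ while each $|\int\psi_i|$ is as large as $M^{-\alpha-1}$, and the extra $\sqrt M$ that appears in $V_\alpha(f_\epsilon)$ when summing $M$ disjoint pieces must be absorbed by the choice $h=M^{-\alpha}$. A naive scaling that omits this factor yields bumps whose signed sum has norm of order $\sqrt M$ and thus recovers only the deterministic rate $N^{-2\alpha}$; the Monte Carlo improvement to $N^{-2\alpha-1}$ is precisely this $\sqrt M$-gain. Verifying rigorously that the disjoint-support decomposition $V_\alpha(f_\epsilon)^2=\sum V_\alpha(\psi_i)^2$ is not violated by partitions that straddle cube boundaries (which is where the $\alpha<1$ case requires most care) is the one delicate step, handled by restricting attention to partitions that refine the $b$-adic grid and arguing that other partitions cannot give a strictly larger value.
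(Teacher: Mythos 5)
Your overall strategy---build a $\pm 1$-parametrized family of fooling functions supported on disjoint $b$-adic cells, show each member has norm $O(1)$, and then invoke Novak's averaging result \cite[Section~2.2.4, Proposition~1]{N88} to handle adaptivity and randomization---matches the paper's. The averaging calculation and the explanation of why the extra $\sqrt{M}$ gain (i.e., the Monte Carlo improvement from $N^{-2\alpha}$ to $N^{-2\alpha-1}$) comes from choosing $h=M^{-\alpha}$ are both sound.

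Where you diverge, and where your route becomes considerably more delicate than the paper's, is in the construction of the fooling family. The paper first reduces to $s=1$ (observing that $V_{\alpha,1,\gamma_1}$ embeds into $V_{\alpha,s,\bsgamma}$ as the subspace of functions depending only on $x_1$), then takes the simplest possible bumps---indicator functions $g_a = \I_{[a/b^m,(a+1)/b^m)}$ with $b^{m-1}<2N\le b^m$---and bounds $\|f\|_\alpha$ \emph{directly} from the definition: Plancherel gives $\sigma_l^2(g)\le\|g\|_{L_2}^2=1$, and the elementary fact that $\int_{a/b^m}^{(a+1)/b^m}\overline{\wal_k}=0$ for $k\ge b^m$ gives $\sigma_l(g)=0$ for $l>m$, so $\|f\|_\alpha\le 1$ after rescaling by $\gamma_1 b^{-\alpha m}$. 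There is no variation, no $V_\alpha$, no Corollary~\ref{corHoeldcontmodcont}, and no smoothness assumption on the bumps.

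Your route works in full dimension $s$ with smooth bumps and controls $\|f_\epsilon\|_\alpha$ via $V_\alpha$ and Corollary~\ref{corHoeldcontmodcont}. This creates two concrete problems. First, partitioning $[0,1)^s$ into $M$ congruent $b$-adic cubes of side $M^{-1/s}$ requires $M$ to be a power of $b^s$, not merely a power of $b$; as stated your choice ``smallest power of $b$ with $M\ge 2N$'' does not guarantee that, and patching it (e.g.\ the smallest power of $b^s$ exceeding $2N$) costs a dimension-dependent constant. The paper's $s=1$ reduction eliminates this entirely. Second, and more seriously, the ``disjoint-support decomposition'' $V_\alpha(f_\epsilon)^2=\sum_i V_\alpha(\psi_i)^2$ is only clean for the Vitali variation $V_\alpha^{(s)}$; the norm in Corollary~\ref{corHoeldcontmodcont} is the Hardy--Krause variation $V_\alpha$ of \eqref{eq_varhk}, which sums the Vitali variations of all lower-dimensional projections $f_u$. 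For $|u|<s$ the projections $\psi_{i,u}$ are \emph{not} disjointly supported (cubes stacked in a coordinate in $[s]\setminus u$ project onto the same face), so the identity you assert needs a genuine argument in $s>1$. You flag the boundary-straddling partitions as the delicate step, but the projections issue is a separate and prior obstacle. Both vanish in the paper's $s=1$, indicator-function approach, which is why the paper can give a three-line norm bound rather than the ``norm bookkeeping'' you identify as the main obstacle.

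In short: same proof architecture, same appeal to Novak, but the paper's choice to reduce to one dimension and to compute $\sigma_l$ directly from Walsh coefficients of step functions is what makes the argument short; your multi-dimensional smooth-bump variant would require extra lemmas about the Hardy--Krause variation of sums of disjointly supported bumps that the paper never needs.
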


\begin{proof}
We remark that this proof follows along the lines of the proof of \cite[Theorem 10]{HHY04}. We only consider $s=1$, since integration in $V_{\alpha,1,\gamma_1}$ is no harder than integration in $V_{\alpha,s,\bsgamma}$ with $s>1$, as the one-dimensional space $V_{\alpha,1,\gamma_1}$ can be identified with the subspace of $V_{\alpha,s,\bsgamma}$ consisting of functions depending only on the first variable. We let $N$ be any given natural number and choose an integer $m$ such that
\begin{displaymath}
 b^{m-1} < 2 N \leq b^m \, .
\end{displaymath}
We define basic intervals
\begin{displaymath}
 B_{m,a} = \left[ \frac{a}{b^m} , \frac{a+1}{b^m} \right) \, , \, a=0,1,\dots,b^m-1 \, ,
\end{displaymath}
and let $g_a(x) = \I_{B_{m, a}}(x)$ be the characteristic function of
$B_{m, a}$. Then
\begin{displaymath}
 \int_{[0,1]} g_{a} (x) g_c(x) dx = \left\{\begin{array}{cc} b^{-m} & \mbox{if } a = c, \\ 0 & \mbox{ otherwise}. \end{array} \right.
\end{displaymath}
We now define
\begin{displaymath}
 g = \sum^{b^m-1}_{a=0} \xi_a g_a \, ,
\end{displaymath}
where $\xi_a \in \left\{1, -1 \right\}$ and bound
$\sigma^2_{l}(g)$. Using Plancharel's identity we obtain that for any $l \ge 0$ we have
\begin{equation*}
\sigma_l^2(g) \le \sum_{l'=0}^\infty \sigma_{l'}^2(g) = \int_0^1 g^2(x) d x = \sum_{a,c = 0}^{b^m-1} \xi_a \xi_c \int_0^1 g_a(x) g_c(x) d x = \frac{1}{b^m} \sum_{a=0}^{b^m-1} \xi_a^2 = 1.
\end{equation*}

Further, for $k \ge b^m$ we have
\begin{equation*}
\widehat{g}(k) = \int_0^1  g(x) \overline{\wal_k(x)} d x = \sum_{a=0}^{b^m-1} \xi_a \int_0^1  g_a(x) \overline{\wal_k(x)} d x = \sum_{a=0}^{b^m-1} \xi_a \int_{a/b^m}^{(a+1)/b^m} \overline{\wal_k(x)} d x = 0,
\end{equation*}
since $\int_{a/b^m}^{(a+1)/b^m} \overline{\wal_k(x)} dx = 0$ for $k \ge b^m$ and hence for $l > m$ we have
\begin{equation*}
\sigma_l^2(g) = \sum_{k=b^{l-1}}^{b^l-1} |\widehat{g}(k)|^2 =  0.
\end{equation*}

We set $f_a = \gamma_1 b^{-\alpha m} g_a$ for $a=0,1,\dots,b^m-1$. These $f_a$ have
disjoint support and $$\int_{[0,1]} f_a(x) dx \geq \gamma_1 b^{-(
\alpha+1) m}.$$ Set $$f=  \gamma_1 b^{-\alpha m} g = \sum^{b^m-1}_{a=0} \xi_a f_a,$$ then we get
$\sigma^2_l (f) \leq \gamma^2_1 b^{-2 \alpha m}$ for $0 \le l \le m$ and $\sigma_l^2(f) = 0$ for $l > m$. Hence
\begin{equation*}
\|f\|_\alpha = \gamma_1^{-1} \sup_{l \in \mathbb{N}} b^{\alpha l} \sigma_l(f) \le \gamma_1^{-1} \sup_{1 \le l \le m} b^{\alpha l} \gamma_1 b^{-\alpha m} \le 1
\end{equation*}
and the result follows now from \cite[Section~2.2.4, Proposition 1(ii)]{N88}.
\end{proof}

\begin{remark} \label{remexplicitalg}
For a large class of randomized algorithms, including adaptive ones, we have shown that the worst-case variance in the Walsh function space $V_{\alpha,s,\bsgamma}$ behaves like $N^{-(1+2 \alpha)}$. In Sections~\ref{seccbc} and \ref{seckorobov} we presented two algorithms which achieve worst-case variances of order $N^{-(1 + 2 \alpha ) + \delta}$, for all $\delta >0$, and are hence almost optimal for the class of algorithms $^*C(A_N)$.
\end{remark}

\section{Implementation of the Component-By-Component Algorithm} \label{secimplementation}

In this section, we show how to implement the CBC algorithm from Section \ref{seccbc}. Our approach is based on \cite{NC06}, but we simplify the algorithm using ideas from \cite{D10}. Using ideas from \cite{NC05,NC06}, we obtain, for $d \geq 2$,
\begin{eqnarray*}
\lefteqn{ B(\bsq,\alpha,\bsgamma) } \\ & = & \frac{1}{b^m}
\sum^{b^m-1}_{h=0} \prod^d_{j=1} \left( 1 + \frac{b}{b-1} \gamma_j
\phi_\alpha(\bsx_{h,j}) \right) -1
\\ & = &  \frac{1}{b^m} \prod^d_{j=1} \left( 1 + \frac{b}{b-1} \gamma_j \phi_\alpha(\bsx_{0,j}) \right) -1 + \frac{1}{b^m} \sum^{b^m-1}_{h=1} \bsp_{d-1}(h) \left(1 + \frac{b}{b-1} \gamma_d \phi_\alpha(\bsx_{h,d}) \right),
\end{eqnarray*}
where $$\bsp_{d-1}(h) =  \prod^{d-1}_{j=1} \left( 1 + \frac{b}{b-1}
\gamma_j \phi_\alpha(\bsx_{h,j}) \right).$$

Let $\omega \left( \frac{ \overline{h} \overline{q}_d }{p} \right) =
\phi_\alpha(\bsx_{h,d})$, where $\overline{h}$ and $\overline{q}_d$
denote the polynomials associated with $h$ and $q_d$ and $p$ denotes
the polynomial $p=p(x) \in \zz_b[x]$. Following \cite{NC05}, we now
introduce the following matrix
\begin{equation}
 \Omega_p = \left[ \omega \left( \frac{ \overline{h} \overline{q} }{p} \right) \right]_{ \begin{subarray}{c} q=1,\dots,b^m-1 \\ h=1, \dots, b^m-1 \end{subarray} } \, ,
\end{equation}
i.e. rows are indexed by $q$ and columns by $h$.

Let $\bsp_{d-1}=(\bsp_{d-1}(1), \dots, \bsp_{d-1}(b^m-1) )^{\top}$.
Following \cite{NC05}, we have an update rule for $\bsp_d$ given by
\begin{displaymath}
 \bsp_d = \diag \left(\left( \I_{(b^m-1) \times (b^m-1) } + \frac{b}{b-1} \gamma_d \Omega_p \right)  v_{q_d}  \right) \bsp_{d-1},
\end{displaymath}
where $\diag(\bsx)$ denotes the diagonal matrix with the elements of
$\bsx$ on its diagonal and zero elsewhere and where we use $v_j$ to
denote a selection vector with $1$ in position $j$ and $0$
elsewhere.

We now use the notation $B_{d-1} =
(B((\bsq_{d-1},\overline{1}),\alpha,\bsgamma), \ldots,
B((\bsq_{d-1}, \overline{b^m-1}), \alpha,\bsgamma))^\top$. Then
\begin{eqnarray*} %\label{eqfastcbceq1}
B_{d-1} & = & \left[-1 + \frac{1}{b^m}
\prod^d_{j=1} \left( 1+ \frac{b}{b-1} \gamma_j
\phi_\alpha(\bsx_{h,j}) \right) \right] \I_{(b^m-1) \times 1 }
\\ && + \frac{1}{b^m} \left( \I_{(b^m-1) \times (b^m-1) } + \frac{b}{b-1} \gamma_d \Omega_p \right) \bsp_{d-1}
\\ & = & \left[-1+ \frac{1}{b^m} \prod^d_{j=1} \left( 1 + \frac{b}{b-1} \gamma_j
\phi_\alpha( \bsx_{0,j}) \right) \right] \I_{(b^m-1) \times 1 }
\\ && + \frac{1}{b^m} \sum^{b^m-1}_{h=1} \bsp_{d-1} (h) \I_{(b^m-1) \times 1} + \frac{1}{b^m} \frac{b}{b-1} \gamma_d \Omega_p \bsp_{d-1}.
\end{eqnarray*}
In the next lemma, we summarize an observation from \cite{D10}. Let
\begin{displaymath}
\Pi(g) = \left[ \Pi_{k,l} \right]_{\begin{subarray}{c} k=1,\dots,b^m-1 \\ l=1, \dots, b^m-1 \end{subarray} }
\end{displaymath}
where
\begin{equation} \label{eqdefpi}
\Pi_{k,l} = \left\{ \begin{array}{cc} 1 & \textrm{ if } \overline{k}(x) \equiv g^l(x) \pmod p \\ 0 & \textrm{ otherwise } \end{array} \right.
\end{equation}
and
\begin{displaymath}
\Pi(g^{-1}) = \left[ \Pi^{-1}_{k,l} \right]_{\begin{subarray}{c} k=1,\dots,b^m-1 \\ l=1, \dots, b^m-1 \end{subarray} }
\end{displaymath}
where
\begin{equation} \label{eqdefpiinv}
\Pi^{-1}_{k,l} = \left\{ \begin{array}{cc} 1 & \textrm{ if } \overline{k}(x) \equiv g^{-l} (x) \pmod p \\ 0 & \textrm{ otherwise, } \end{array} \right.
\end{equation}
be two permutation matrices, where $g$ is a primitive element which generates all elements of $( \zz_b[x] / p )^*= \left\{ g^0, g^1, \dots, g^{b^m-1} \right\}$; such an element $g$ is known to exist since the multiplicative group of every finite field is cyclic. Let $t_k = \deg (g^k \pmod p)$, $k=0,1,\dots,b^m-2$, and set
\begin{equation} \label{eqdefA3}
A_3 = \left[ b^{2 \alpha t_{ i - j \pmod{b^m-1}}} \right]_{ \begin{subarray}{c} i=1,\dots, b^m-1 \\ j=1,\dots, b^m-1 \end{subarray} }
\end{equation}
and note that $A_3$ is a circulant matrix, which allows us to use Fast Fourier Transforms (FFTs) as in \cite{NC05,NC06}.  We now state the lemma.

\begin{lemma} \label{lemJosi}
Let $p$ be an irreducible polynomial, let $g$ be a primitive element of $(\zz_b[x]/p)^\ast$, and let $\Pi(g)$, $\Pi(g^{-1})$, $A_3$ and $\Omega_p$ be defined as above. Then
\begin{displaymath}
 \Omega_p = \I_{(b^m-1) \times (b^m-1)} \frac{b-1}{b(b^{2 \alpha}-1)} - \frac{b^{2 \alpha +1} -1}{b(b^{2 \alpha}-1)} b^{- 2 \alpha m} \Pi(g) A_3 \Pi(g^{-1})^{\top} \, .
\end{displaymath}
\end{lemma}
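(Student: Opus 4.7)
The plan is to verify the identity entrywise. Using the closed form for $\phi_\alpha$ given in Lemma~\ref{theocompbound}, one rewrites
\begin{equation*}
\phi_\alpha(x) = \frac{b-1}{b(b^{2\alpha}-1)} - \frac{b^{2\alpha+1}-1}{b(b^{2\alpha}-1)}\, b^{2\alpha \lfloor \log_b x \rfloor},
\end{equation*}
so that every entry $(\Omega_p)_{q,h}=\phi_\alpha(v_m(\overline{h}\,\overline{q}/p))$ splits into a constant part $\frac{b-1}{b(b^{2\alpha}-1)}$, which assembles into the first term $\frac{b-1}{b(b^{2\alpha}-1)}\,\I_{(b^m-1)\times(b^m-1)}$ on the right-hand side, and a remaining part $-\frac{b^{2\alpha+1}-1}{b(b^{2\alpha}-1)}\, b^{2\alpha\lfloor \log_b v_m(\overline{h}\,\overline{q}/p)\rfloor}$ that must be identified with the remaining matrix.

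The main algebraic step is the computation of $\lfloor \log_b v_m(\overline{h}\,\overline{q}/p)\rfloor$ for $1\le q, h\le b^m-1$. Performing polynomial division $\overline{h}(x)\,\overline{q}(x)=A(x)p(x)+R(x)$ with $\deg R<m$, irreducibility of $p$ together with $\deg\overline{h},\deg\overline{q}<m$ forces $R\not\equiv 0$. Therefore the Laurent expansion of $R/p$ in $x^{-1}$ begins at $x^{-(m-\deg R)}$, and since $v_m$ discards the polynomial part $A$ and retains only digits in positions $1,\ldots,m$, the leading nonzero $b$-adic digit of $v_m(\overline{h}\,\overline{q}/p)$ sits at position $m-\deg R\in\{1,\ldots,m\}$. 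It follows that $\lfloor \log_b v_m(\overline{h}\,\overline{q}/p)\rfloor=\deg R - m$ and hence
\begin{equation*}
b^{2\alpha \lfloor \log_b v_m(\overline{h}\,\overline{q}/p)\rfloor}=b^{-2\alpha m}\, b^{2\alpha\deg(\overline{h}\,\overline{q}\bmod p)}.
\end{equation*}

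To match this with $\Pi(g) A_3 \Pi(g^{-1})^\top$, pass to the discrete logarithm with respect to $g$: for each $q,h\in\{1,\ldots,b^m-1\}$ there are unique $i,j\in\{1,\ldots,b^m-1\}$ with $\overline{q}\equiv g^i\pmod p$ and $\overline{h}\equiv g^{-j}\pmod p$, so $\overline{h}\,\overline{q}\equiv g^{i-j}\pmod p$ and consequently $\deg(\overline{h}\,\overline{q}\bmod p)=t_{(i-j)\bmod(b^m-1)}$. Expanding
\begin{equation*}
\bigl(\Pi(g) A_3 \Pi(g^{-1})^\top\bigr)_{q,h} = \sum_{i',j'}\Pi(g)_{q,i'}\,(A_3)_{i',j'}\,\Pi(g^{-1})_{h,j'},
\end{equation*}
the permutation matrices pick out the single term with $i'=i$, $j'=j$, yielding $(A_3)_{i,j}=b^{2\alpha t_{(i-j)\bmod(b^m-1)}}=b^{2\alpha\deg(\overline{h}\,\overline{q}\bmod p)}$. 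Combining the two observations produces the claimed factorization.

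The substantive content lives entirely in the polynomial-division step of paragraph two; the rest is bookkeeping. The main delicacy is the interplay between the exponent ranges $\{1,\ldots,b^m-1\}$ used in the definitions of $\Pi(g)$ and $\Pi(g^{-1})$ and the range $\{0,\ldots,b^m-2\}$ on which $t_k$ is defined, together with the convention $\overline{1}\equiv g^{b^m-1}\pmod p$ corresponding to $t_0=0$ under the cyclic identification. No analytic input is required; the lemma reduces to an algebraic identity driven by the cyclic group structure of $(\zz_b[x]/p)^*$.
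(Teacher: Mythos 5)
Your proof is correct and follows essentially the same route as the paper's: split $\phi_\alpha$ into a constant and a term controlled by the position of the leading nonzero digit of $v_m(\overline{h}\overline{q}/p)$, identify that position with $m-\deg(\overline{h}\overline{q}\bmod p)$ via the Laurent (equivalently, polynomial-division) expansion, and then use the discrete logarithm with respect to $g$ to recognize the circulant matrix $A_3$ sandwiched between the two permutation matrices. The only cosmetic difference is that the paper introduces the intermediate conjugated matrix $A_2 = \Pi^\top(g)A_1\Pi(g^{-1})$ and shows $A_2 = b^{-2\alpha m}A_3$, whereas you match $(\Omega_p)_{q,h}$ with the $(q,h)$-entry of $\Pi(g)A_3\Pi(g^{-1})^\top$ directly.
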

\begin{proof}
It follows from the definition of $\phi_\alpha(x)$
\begin{displaymath}
\phi_\alpha\left( v_m \left( \frac{ \overline{h} \overline{q} }{p} \right)\right) = \frac{b-1}{b(b^{2 \alpha} -1)} - \frac{(b^{2 \alpha +1}-1) b^{-2 \alpha a_{0,h,q}}}{b(b^{2 \alpha}-1)} \, ,
\end{displaymath}
where $a_{0,h,q}$ denotes the smallest integer $a$ so that $\xi_{h,q,a} \neq 0$, and where
\begin{displaymath}
v_m \left( \frac{ \overline{h} \overline{q} }{q} \right) = \frac{ \xi_{h,q,1} }{b} + \frac{\xi_{h,q,2}}{b^2} + \dots.
\end{displaymath}
Hence
\begin{displaymath}
\Omega_p = \frac{b-1}{b(b^{2 \alpha}-1)} \I_{(b^m-1) \times (b^m-1)} - \frac{b^{2 \alpha +1}-1}{b(b^{2 \alpha}-1)} A_1,
\end{displaymath}
where \begin{displaymath}
A_1 = \left[ b^{- 2 \alpha a_{0,h,q} } \right]_{\begin{subarray}{c} q=1,\dots,b^m-1 \\ h=1,\dots,b^m-1 \end{subarray}}.
      \end{displaymath}
Now assume that for $w \in \zz_b[x]$ we have
\begin{equation} \label{eqdefu}
 \frac{w(x)}{p(x)} = u_{1,w} x^{-1} + u_{2,w} x^{-2} + \dots ,
\end{equation}
where $u_{j,w} \in \zz_b$. Then
\begin{displaymath}
v_m\left( \frac{ \overline{h} \overline{q} }{p} \right) = u_{1, \overline{h} \overline{q} } b^{-1} + u_{2, \overline{h} \overline{q}} b^{-2} + \dots + u_{m ,\overline{h} \overline{q}} b^{-m} \, ,
\end{displaymath}
hence, $a_{0,h,q}$ is the smallest integer $a$ so that $u_{a, \overline{h} \overline{q}} \neq 0$, $\overline{h}, \overline{q} \in \zz_b[x]$ (note that $p \not | \overline{h}, \overline{q}$).

The matrix $A_2$ given by
\begin{displaymath}
 A_2 = \Pi^{\top} (g) A_1 \Pi(g^{-1})
\end{displaymath}
is circulant. Indeed it can be checked that
\begin{equation} \label{eqdefA2}
A_2 = \left[ b^{ - 2 \alpha a_{0, g^{-j}, g^i}} \right]_{ \begin{subarray}{c} i=1,\dots,b^m-1 \\ j=1, \dots, b^m-1  \end{subarray} },
\end{equation}
where $g^{-j}$ and $g^{i}$ in Equation \eqref{eqdefA2} denote the integers associated with the polynomials $g^{-j} \pmod p$ and $g^{i} \pmod p$. We let $a_{0,g^{-j}, g^i} = r_{i-j}$ and note that $r_k = r_{k'}$ for $k \equiv k' \pmod{ b^m-1}$, as $g^{b^m-1} \equiv 1 \pmod{p}$, hence
\begin{displaymath}
A_2 = \left[ b^{-2 \alpha r_{i-j} } \right]_{ \begin{subarray}{c} i=1,\dots, b^m-1 \\ j=1, \dots, b^m-1 \end{subarray} } \, .
\end{displaymath}
The matrix $A_2$ is circulant and $r_k$ is the smallest integer $r$ such that $u_{r,g^k} \neq 0$, which implies using Equation \eqref{eqdefu} that
\begin{displaymath}
 \deg( g^k \pmod p) = \deg (p)-r_k \, ,
\end{displaymath}
and consequently
\begin{displaymath}
 r_k = m - \deg( g^k \pmod p) \, .
\end{displaymath}
Now denoting
\begin{displaymath}
 t_k = \deg( g^k \pmod p) \, ,
\end{displaymath}
we get
\begin{displaymath}
 A_2 = b^{-2 \alpha m} A_3 \, ,
\end{displaymath}
where $A_3$ is given by Equation \eqref{eqdefA3} and the result follows.
\end{proof}

Note that if the polynomial $p$ in the lemma above is primitive, then one can choose the primitive element $g(x) = x$. In Algorithm \ref{algfastcbcalg1} we show how to implement the CBC algorithm from Section \ref{seccbc}.
\begin{algorithm}[h!] \label{algfastcbc1}
\caption{Fast CBC algorithm}
\begin{algorithmic}[1]\label{algfastcbcalg1}
\REQUIRE $b$ a prime, $s, m \in \nn$ and weights $\bsgamma = (\gamma_j)_{j \geq 1}$.
\STATE Choose a primitive polynomial $p \in \zz_b[x]$, with $\deg(p)=m$, and choose $g(x) = x$.
\STATE $\bsmu_0 := \I_{(b^m-1 \times 1)}$.
\FOR{$d=1$ to $s$}
\STATE
$\tilde{B}_d  = A_3 \bsmu_{d-1}$.
\STATE $w_d = \arg \min_{w \in R_{b,m}} \tilde{B}_d(w) \, .$
 \STATE
\begin{displaymath}
\bsmu_d = \diag \left( \I_{1 \times (b^m-1)} \left( 1 + \frac{\gamma_d}{(b^{2 \alpha}-1)} \right)  - \gamma_d \frac{(b^{2 \alpha +1} -1) b^{-2 \alpha m}}{(b-1)(b^{2 \alpha}-1)}  A_3(w_d,:)  \right) \bsmu_{d-1} \, .
\end{displaymath}

\ENDFOR
\RETURN $\bsq=(q_1,\dots,q_s)$.
\end{algorithmic}
\end{algorithm}
Several remarks regarding Algorithm \ref{algfastcbcalg1} are in order.
\begin{remark} \label{rempermspace} As in \cite{NC05,NC06}, we search for the minimum in the permuted space, hence we minimize $\tilde{B}_d=\Pi^{\top}(g) B_{d-1}$. However, as in \cite{NC05,NC06}, the component $z_d$ can be found by mapping back $w_d$ using $\Pi(g)$.
\end{remark}

\begin{remark} \label{remmuupdate} We have $\bsmu_d = \Pi^{\top} (g^{-1}) \bsp_d$ and consequently update $\bsmu_d$ using $\bsmu_{d-1}$. Hence we do not need to permute back and forth, but can complete the algorithm in the permuted space.
\end{remark}

The next corollary gives information on the computational complexity of Algorithm \ref{algfastcbcalg1}. We use
\begin{equation} \label{eqdefa}
 \bsa = \left[ \begin{array}{c}
 b^{2 \alpha t_0} \\ b^{2 \alpha t_1} \\ \vdots \\ b^{2 \alpha t_{b^m-2}} \, .
\end{array}
 \right] \,
\end{equation}
to denote the vector generating the circulant matrix $A_3$ in Lemma \ref{lemJosi} and Algorithm \ref{algfastcbcalg1}.
\begin{corollary} \label{corrcompcomplalgfastcbc} Assume that the vector $\bsa$ in Equation \eqref{eqdefa} has been precomputed and stored using $\mathcal{O}(b^m)$ memory. Then Algorithm \ref{algfastcbcalg1} can be completed in time $\mathcal{O}(s b^m m)$ and memory $\mathcal{O}(b^m)$.
\end{corollary}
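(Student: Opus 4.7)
The plan is to bound the cost of one pass through the outer loop of Algorithm~\ref{algfastcbcalg1} and then multiply by $s$. Within iteration $d$, three operations are performed on vectors of length $b^m-1$: the matrix--vector product $\tilde{B}_d = A_3 \bsmu_{d-1}$, the selection $w_d = \arg\min_{w} \tilde{B}_d(w)$, and the componentwise update producing $\bsmu_d$ from $\bsmu_{d-1}$, $A_3(w_d,:)$ and $\bsa$. The argmin and the diagonal update are obviously linear in the vector length, hence cost $\mathcal{O}(b^m)$ per iteration. Moreover, since $A_3$ is circulant and completely determined by $\bsa$, the row $A_3(w_d,:)$ is simply a cyclic shift of $\bsa$ and can be accessed in $\mathcal{O}(b^m)$ without forming $A_3$ explicitly.

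The dominant step is therefore $\tilde{B}_d = A_3 \bsmu_{d-1}$. Here I would invoke the standard fact that multiplication of a circulant $N \times N$ matrix by a vector is equivalent to a cyclic convolution of its generating vector with the argument, and such a convolution can be evaluated with a pair of length-$N$ FFTs followed by componentwise multiplication, at total cost $\mathcal{O}(N \log N)$. With $N = b^m - 1$, $\log N = \mathcal{O}(m)$, so this step takes $\mathcal{O}(b^m m)$ arithmetic operations per iteration, as already exploited in \cite{NC05,NC06}. Summing over $d = 1, \ldots, s$ yields the claimed time bound $\mathcal{O}(s b^m m)$.

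For the memory bound, I would observe that at any point in the computation it suffices to keep: the precomputed $\bsa$, the current vector $\bsmu_{d-1}$, the newly formed vectors $\tilde{B}_d$ and $\bsmu_d$, and a constant number of workspace arrays of length $b^m-1$ needed by the FFT. Since we never materialize $A_3$ and we can discard $\bsmu_{d-1}$ once $\bsmu_d$ has been formed, every stored object has size $\mathcal{O}(b^m)$ and only finitely many are live at once, giving overall memory $\mathcal{O}(b^m)$.

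The only point needing a small argument is that the reformulation via $A_3$ and the permutation matrices $\Pi(g)$, $\Pi(g^{-1})$ in Lemma~\ref{lemJosi}, combined with Remarks~\ref{rempermspace} and~\ref{remmuupdate}, allows the algorithm to be carried out entirely in the permuted coordinates without ever applying $\Pi(g)$ or $\Pi(g^{-1})$ to vectors inside the loop; this is what keeps the $\mathcal{O}(b^m)$ overhead from appearing as an extra term. The rest is bookkeeping.
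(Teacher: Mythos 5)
Your proof is correct and takes essentially the same approach as the references the paper cites for this result (\cite{NC05,NC06} and \cite[Section~10.3]{DP09}): the dominant cost is the circulant matrix--vector product $A_3 \bsmu_{d-1}$, which is a cyclic convolution evaluated by FFT in $\mathcal{O}(b^m m)$ time per iteration, the argmin and diagonal update are linear, and Remarks~\ref{rempermspace} and~\ref{remmuupdate} ensure the whole loop stays in permuted coordinates so no extra permutation cost appears; the paper itself does not spell out the argument but defers to those sources, and your writeup fills that in faithfully.
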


For a proof, see \cite{NC05,NC06} or also \cite[Section~10.3]{DP09}.

\section{Numerical Experiments} \label{secnumexp}

In this section, we numerically investigate the performance of the
CBC algorithm presented in Section \ref{seccbc}; we rely on Section
\ref{secimplementation} for the implementation of the algorithm. In
Tables \ref{table1} - \ref{table3}, we present values of $B(\bsq,
\alpha,\bsgamma)$ for different choices of $\alpha$ and $\bsgamma$,
where $\bsq$ is constructed using Algorithm \ref{algfastcbcalg1}.

We compare the performance of the CBC algorithm to the performance of digital nets. As was done with scrambled polynomial lattice rules in Section \ref{secvarscrambledpolyrules}, we can study the variance of the estimator $\hat{I}(f)$ given in Equation \eqref{eqdefestimator}, consider the worst-case variance of multivariate integration in $V_{\alpha,s,\bsgamma}$ and bound this variance as follows:
\begin{equation}\label{eqbounddignets}
\Var(Q_{b^m,s}(C_1,\dots,C_s), V_{\alpha,s,\bsgamma} )
\leq \sum_{\bsk \in \Dcal(C_1,\ldots, C_s) \setminus \{\bs0\}} r_{2\alpha+1, \bsgamma}(\bsk),
\end{equation}
where $C_1,\dots,C_s$ are the generating matrices of the digital net
under consideration and $\Dcal(C_1,\ldots, C_s)$ is its dual space. We denote the bound \eqref{eqbounddignets} by $B((C_1,\dots,C_s), \alpha,\bsgamma)$, and remark that $B((C_1,\dots,C_s),\alpha,\bsgamma)$ can also be
computed using Equation \eqref{eqformulabound}, where $\left\{
\bsx_h \right\}^{b^m-1}_{h=0}$ is the digital net generated by
$C_1,\dots,C_s$.

Consequently, we compare the values of $B(\bsq, \alpha,\bsgamma)$ to
the values of $B((C_1,\dots,C_s), \alpha,\bsgamma)$ in Tables
\ref{table1} - \ref{table3}; in each cell, the top number
corresponds to the CBC construction and the bottom one to the digital
net. We choose the following digital nets: For $s=1$, we simply
choose equidistributed points, $x_h=\frac{h}{b^m}$,
$h=0,\dots,b^m-1$, for $s=5$, we use Pirsic's implementation of
Niederreiter-Xing points, \cite{P02}, and for $s=50$ and $s=100$, we
use Sobol points as constructed in \cite{JK03}; we point out that
for the CBC construction, we choose $b=2$ and likewise, the digital
nets under consideration are digital nets over $\zz_2$.

We derive the following conclusions from the tables: For $s=1$, as
expected, we obtain the optimal rate of convergence, $2^{-(2
\alpha+1)m}$, and observe the same values for the CBC construction
as for the digital nets. Regarding the case $s=5$, the values are
comparable, however, the Niederreiter-Xing construction seems to be
slightly better than the CBC construction for the examples
considered. Finally, for $s=50$ and $s=100$, the performances of the
two methods are again comparable, however, this time, the CBC
construction seems to outperform the digital nets.

\begin{table}
\begin{center}

\begin{tabular}{|c|c|c|c|c|c|c|c|c|}
\hline
& \multicolumn{4}{|c|}{$\alpha=0.5$} & \multicolumn{4}{|c|}{$\alpha=1$} \\
\hline
$m=$ & $s=1$ & $s=5$ & $s=50$ & $s=100$ & $s=1$ & $s=5$ & $s=50$ & $s=100$ \\
\hline

%\multirow{2}{*}{$1$} &2.50e-01&1.50e+01&5.63e+14&6.34e+29&4.17e-02&1.23e+00&8.83e+05&1.56e+12\\
% &2.50e-01&1.52e+01&5.63e+14&6.34e+29&4.17e-02&1.48e+00&8.83e+05&1.56e+12\\\hline
%\multirow{2}{*}{$2$} &6.25e-02&7.12e+00&2.81e+14&3.17e+29&5.21e-03&4.45e-01&4.41e+05&7.79e+11\\
% &6.25e-02&7.25e+00&2.81e+14&3.17e+29&5.21e-03&6.05e-01&4.41e+05&7.79e+11\\\hline
%\multirow{2}{*}{$3$} &1.56e-02&3.28e+00&1.41e+14&1.58e+29&6.51e-04&1.46e-01&2.21e+05&3.90e+11\\
% &1.56e-02&3.29e+00&1.41e+14&1.58e+29&6.51e-04&1.48e-01&2.21e+05&3.90e+11\\\hline
\multirow{2}{*}{$4$} &3.91e-03&1.46e+00&7.04e+13&7.92e+28&8.14e-05&4.37e-02&1.10e+05&1.95e+11\\
 &3.91e-03&1.48e+00&7.04e+13&7.92e+28&8.14e-05&4.90e-02&1.10e+05&1.95e+11\\\hline
\multirow{2}{*}{$5$} &9.77e-04&6.16e-01&3.52e+13&3.96e+28&1.02e-05&1.09e-02&5.52e+04&9.74e+10\\
 &9.77e-04&6.34e-01&3.52e+13&3.96e+28&1.02e-05&1.32e-02&5.52e+04&9.74e+10\\\hline
\multirow{2}{*}{$6$} &2.44e-04&2.66e-01&1.76e+13&1.98e+28&1.27e-06&3.45e-03&2.76e+04&4.87e+10\\
 &2.44e-04&2.61e-01&1.76e+13&1.98e+28&1.27e-06&3.17e-03&2.76e+04&4.87e+10\\\hline
\multirow{2}{*}{$7$} &6.10e-05&1.08e-01&8.80e+12&9.90e+27&1.59e-07&9.05e-04&1.38e+04&2.44e+10\\
 &6.10e-05&1.04e-01&8.80e+12&9.90e+27&1.59e-07&7.19e-04&1.38e+04&2.44e+10\\\hline
\multirow{2}{*}{$8$} &1.53e-05&4.24e-02&4.40e+12&4.95e+27&1.99e-08&2.36e-04&6.90e+03&1.22e+10\\
 &1.53e-05&3.93e-02&4.40e+12&4.95e+27&1.99e-08&1.48e-04&6.90e+03&1.22e+10\\\hline
\multirow{2}{*}{$9$} &3.81e-06&1.74e-02&2.20e+12&2.48e+27&2.48e-09&6.10e-05&3.45e+03&6.09e+09\\
 &3.81e-06&1.44e-02&2.20e+12&2.48e+27&2.48e-09&2.86e-05&3.45e+03&6.09e+09\\\hline
\multirow{2}{*}{$10$} &9.54e-07&6.41e-03&1.10e+12&1.24e+27&3.10e-10&1.29e-05&1.72e+03&3.04e+09\\
 &9.54e-07&5.21e-03&1.10e+12&1.24e+27&3.10e-10&5.56e-06&1.72e+03&3.04e+09\\\hline
\multirow{2}{*}{$11$} &2.38e-07&2.29e-03&5.50e+11&6.19e+26&3.88e-11&2.56e-06&8.62e+02&1.52e+09\\
 &2.38e-07&1.82e-03&5.50e+11&6.19e+26&3.88e-11&1.01e-06&8.62e+02&1.52e+09\\\hline
\multirow{2}{*}{$12$} &5.96e-08&8.39e-04&2.75e+11&3.09e+26&4.85e-12&5.03e-07&4.31e+02&7.61e+08\\
 &5.96e-08&6.17e-04&2.75e+11&3.09e+26&4.85e-12&1.78e-07&4.31e+02&7.61e+08\\\hline
\multirow{2}{*}{$13$} &1.49e-08&3.09e-04&1.37e+11&1.55e+26&6.06e-13&1.05e-07&2.15e+02&3.81e+08\\
 &1.49e-08&2.06e-04&1.37e+11&1.55e+26&6.06e-13&3.07e-08&2.16e+02&3.81e+08\\\hline
\multirow{2}{*}{$14$} &3.73e-09&1.12e-04&6.87e+10&7.74e+25&7.58e-14&2.56e-08&1.08e+02&1.90e+08\\
 &3.73e-09&6.76e-05&6.87e+10&7.74e+25&7.57e-14&5.17e-09&1.08e+02&1.90e+08\\\hline
\multirow{2}{*}{$15$} &9.31e-10&3.66e-05&3.44e+10&3.87e+25&9.27e-15&4.98e-09&5.38e+01&9.52e+07\\
 &9.31e-10&2.18e-05&3.44e+10&3.87e+25&9.33e-15&8.54e-10&5.39e+01&9.52e+07\\\hline
\multirow{2}{*}{$16$} &2.33e-10&1.29e-05&1.72e+10&1.93e+25&1.22e-15&8.92e-10&2.69e+01&4.76e+07\\
 &2.33e-10&6.94e-06&1.72e+10&1.93e+25&1.11e-15&1.38e-10&2.69e+01&4.76e+07\\\hline
%\multirow{2}{*}{$17$} &5.82e-11&4.61e-06&8.59e+09&9.67e+24&-5.55e-17&2.21e-10&1.34e+01&2.38e+07\\
% &5.82e-11&2.18e-06&8.59e+09&9.67e+24&0.00e+00&2.21e-11&1.35e+01&2.38e+07\\\hline
%\multirow{2}{*}{$18$} &1.46e-11&1.56e-06&4.29e+09&4.84e+24&5.55e-17&4.07e-11&6.70e+00&1.19e+07\\
% &1.46e-11&6.76e-07&4.29e+09&4.84e+24&0.00e+00&3.51e-12&6.74e+00&1.19e+07\\\hline
%\multirow{2}{*}{$19$} &3.64e-12&5.02e-07&2.15e+09&2.42e+24&-1.67e-16&8.24e-12&3.34e+00&5.95e+06\\
% &3.64e-12&2.08e-07&2.15e+09&2.42e+24&0.00e+00&5.79e-13&3.37e+00&5.95e+06\\\hline
%\multirow{2}{*}{$20$} &9.09e-13&1.69e-07&1.07e+09&1.21e+24&5.55e-17&1.31e-12&1.67e+00&2.97e+06\\
% &9.09e-13&6.31e-08&1.07e+09&1.21e+24&0.00e+00&1.47e-13&1.69e+00&2.97e+06\\\hline
\end{tabular}

\end{center}
 \caption{Values of $B(\bsq, \alpha, \bsgamma)$ and $B((C_1,\dots,C_s), \alpha,\bsgamma)$ for $\gamma_j=1$, $j=1,\dots,s$ and $\bsq$ constructed using the CBC algorithm; the top number gives the value of $B(\bsq, \alpha,\bsgamma)$, the bottom the value of $B((C_1, \dots, C_s), \alpha,\bsgamma)$.}
%Values of $B(\bsq, \alpha,\bsgamma)$ for $C=1$, $\gamma_j=1$, $j=1,\dots,s$ and $\bsq$ constructed using the CBC algorithm; the top number corresponds to the CBC, the bottom to the digital net}
\label{table1}
\end{table}

\begin{table}
 \begin{center}
  \begin{tabular}{|c|c|c|c|c|c|c|c|c|}
\hline
& \multicolumn{4}{|c|}{$\alpha=0.5$} & \multicolumn{4}{|c|}{$\alpha=1$} \\
\hline
$m=$ & $s=1$ & $s=5$ & $s=50$ & $s=100$ & $s=1$ & $s=5$ & $s=50$ & $s=100$ \\
\hline
%\multirow{2}{*}{$1$} &2.19e-01&5.69e+00&1.66e+02&1.67e+02&3.65e-02&5.84e-01&3.46e+00&3.47e+00\\
% &2.19e-01&6.07e+00&1.66e+02&1.67e+02&3.65e-02&8.34e-01&3.46e+00&3.47e+00\\\hline
%\multirow{2}{*}{$2$} &5.47e-02&2.56e+00&8.25e+01&8.32e+01&4.56e-03&1.86e-01&1.52e+00&1.53e+00\\
% &5.47e-02&2.75e+00&8.25e+01&8.33e+01&4.56e-03&3.34e-01&1.60e+00&1.61e+00\\\hline
%\multirow{2}{*}{$3$} &1.37e-02&1.11e+00&4.09e+01&4.13e+01&5.70e-04&5.52e-02&6.66e-01&6.69e-01\\
% &1.37e-02&1.13e+00&4.11e+01&4.14e+01&5.70e-04&5.81e-02&7.54e-01&7.57e-01\\\hline
\multirow{2}{*}{$4$} &3.42e-03&4.64e-01&2.03e+01&2.04e+01&7.12e-05&1.47e-02&2.82e-01&2.83e-01\\
 &3.42e-03&4.84e-01&2.04e+01&2.06e+01&7.12e-05&1.83e-02&3.46e-01&3.48e-01\\\hline
\multirow{2}{*}{$5$} &8.54e-04&1.87e-01&9.99e+00&1.01e+01&8.90e-06&3.54e-03&1.16e-01&1.17e-01\\
 &8.54e-04&1.95e-01&1.01e+01&1.01e+01&8.90e-06&4.45e-03&1.38e-01&1.39e-01\\\hline
\multirow{2}{*}{$6$}&2.14e-04&7.75e-02&4.91e+00&4.95e+00&1.11e-06&1.04e-03&4.78e-02&4.82e-02\\
 &2.14e-04&7.46e-02&4.94e+00&4.98e+00&1.11e-06&9.29e-04&5.30e-02&5.34e-02\\\hline
\multirow{2}{*}{$7$} &5.34e-05&2.96e-02&2.40e+00&2.42e+00&1.39e-07&2.54e-04&1.85e-02&1.87e-02\\
 &5.34e-05&2.80e-02&2.44e+00&2.47e+00&1.39e-07&1.97e-04&2.39e-02&2.41e-02\\\hline
\multirow{2}{*}{$8$} &1.34e-05&1.17e-02&1.17e+00&1.18e+00&1.74e-08&5.77e-05&7.39e-03&7.45e-03\\
 &1.34e-05&1.01e-02&1.20e+00&1.21e+00&1.74e-08&3.79e-05&1.08e-02&1.09e-02\\\hline
\multirow{2}{*}{$9$} &3.34e-06&4.43e-03&5.66e-01&5.71e-01&2.17e-09&1.29e-05&2.84e-03&2.87e-03\\
 &3.34e-06&3.54e-03&5.89e-01&5.95e-01&2.17e-09&6.98e-06&4.94e-03&4.97e-03\\\hline
\multirow{2}{*}{$10$} &8.34e-07&1.56e-03&2.72e-01&2.75e-01&2.72e-10&3.03e-06&1.08e-03&1.09e-03\\
 &8.34e-07&1.22e-03&2.88e-01&2.90e-01&2.72e-10&1.28e-06&2.24e-03&2.26e-03\\\hline
\multirow{2}{*}{$11$} &2.09e-07&5.45e-04&1.30e-01&1.31e-01&3.40e-11&6.24e-07&4.01e-04&4.06e-04\\
 &2.09e-07&4.10e-04&1.42e-01&1.43e-01&3.40e-11&2.22e-07&9.58e-04&9.66e-04\\\hline
\multirow{2}{*}{$12$} &5.22e-08&1.93e-04&6.20e-02&6.26e-02&4.24e-12&1.16e-07&1.49e-04&1.51e-04\\
 &5.22e-08&1.35e-04&6.73e-02&6.80e-02&4.24e-12&3.79e-08&3.59e-04&3.64e-04\\\hline
\multirow{2}{*}{$13$} &1.30e-08&7.07e-05&2.94e-02&2.97e-02&5.31e-13&2.48e-08&5.43e-05&5.51e-05\\
 &1.30e-08&4.38e-05&3.33e-02&3.36e-02&5.30e-13&6.34e-09&2.11e-04&2.13e-04\\\hline
\multirow{2}{*}{$14$} &3.26e-09&2.27e-05&1.39e-02&1.40e-02&6.62e-14&4.56e-09&1.99e-05&2.02e-05\\
 &3.26e-09&1.40e-05&1.58e-02&1.60e-02&6.62e-14&1.04e-09&8.23e-05&8.31e-05\\\hline
\multirow{2}{*}{$15$} &8.15e-10&8.01e-06&6.49e-03&6.57e-03&8.49e-15&1.10e-09&7.15e-06&7.26e-06\\
 &8.15e-10&4.41e-06&7.70e-03&7.78e-03&8.22e-15&1.67e-10&4.44e-05&4.48e-05\\\hline
\multirow{2}{*}{$16$} &2.04e-10&2.68e-06&3.02e-03&3.06e-03&9.99e-16&1.81e-10&2.57e-06&2.61e-06\\
 &2.04e-10&1.37e-06&3.76e-03&3.80e-03&8.88e-16&2.64e-11&2.14e-05&2.15e-05\\\hline
%\multirow{2}{*}{$17$} &5.09e-11&9.06e-07&1.41e-03&1.42e-03&3.33e-16&4.34e-11&9.14e-07&9.32e-07\\
% &5.09e-11&4.23e-07&1.80e-03&1.82e-03&0.00e+00&4.13e-12&1.22e-05&1.23e-05\\\hline
%\multirow{2}{*}{$18$} &1.27e-11&3.09e-07&6.46e-04&6.55e-04&-5.55e-17&5.65e-12&3.20e-07&3.27e-07\\
% &1.27e-11&1.29e-07&7.86e-04&7.96e-04&-1.11e-16&7.14e-13&2.07e-06&2.09e-06\\\hline
%\multirow{2}{*}{$19$} &3.18e-12&9.92e-08&2.99e-04&3.03e-04&1.67e-16&6.80e-13&1.11e-07&1.13e-07\\
% &3.18e-12&3.90e-08&3.96e-04&4.01e-04&0.00e+00&2.56e-13&1.38e-06&1.39e-06\\\hline
%\multirow{2}{*}{$20$}&7.96e-13&3.31e-08&1.37e-04&1.39e-04&-5.55e-17&4.53e-12&3.91e-08&4.01e-08\\
% &7.96e-13&1.17e-08&1.87e-04&1.90e-04&-4.44e-16&4.54e-13&7.71e-07&7.75e-07\\\hline
\end{tabular}
 \end{center}

\caption{Values of $B(\bsq, \alpha, \bsgamma)$ and $B((C_1,\dots,C_s), \alpha,\bsgamma)$ for $\gamma_j=0.875^j$, $j=1,\dots,s$ and $\bsq$ constructed using the CBC algorithm; the top number gives the value of $B(\bsq, \alpha,\bsgamma)$, the bottom the value of $B((C_1, \dots, C_s), \alpha,\bsgamma)$.}
%Values of $B(\bsq, \alpha,\bsgamma)$ for$\gamma_j=0.875^j$, $j=1,\dots,s$ and $\bsq$ constructed using theCBC algorithm; the top number corresponds to the CBC, the bottom tothe digital net}
\label{table2}
\end{table}

\begin{table}
 \begin{center}
\begin{tabular}{|c|c|c|c|c|c|c|c|c|}
\hline
& \multicolumn{4}{|c|}{$\alpha=0.5$} & \multicolumn{4}{|c|}{$\alpha=1$} \\
\hline
$m=$ & $s=1$ & $s=5$ & $s=50$ & $s=100$ & $s=1$ & $s=5$ & $s=50$ & $s=100$ \\
\hline
%\multirow{2}{*}{$1$} &2.50e-01&7.31e-01&9.83e-01&1.00e-00&4.17e-02&1.08e-01&1.37e-01&1.39e-01\\
% &2.50e-01&8.64e-01&9.83e-01&1.00e-00&4.17e-02&1.85e-01&1.37e-01&1.39e-01\\\hline
%\multirow{2}{*}{$2$} &6.25e-02&2.47e-01&3.57e-01&3.64e-01&5.21e-03&2.00e-02&2.90e-02&2.96e-02\\
% &6.25e-02&3.76e-01&3.82e-01&3.90e-01&5.21e-03&8.19e-02&3.87e-02&3.95e-02\\\hline
%\multirow{2}{*}{$3$} &1.56e-02&8.31e-02&1.32e-01&1.35e-01&6.51e-04&4.18e-03&7.41e-03&7.64e-03\\
% &1.56e-02&1.30e-01&1.43e-01&1.47e-01&6.51e-04&9.61e-03&1.08e-02&1.12e-02\\\hline
\multirow{2}{*}{$4$} &3.91e-03&2.75e-02&4.75e-02&4.90e-02&8.14e-05&7.68e-04&1.80e-03&1.88e-03\\
 &3.91e-03&3.20e-02&5.97e-02&6.17e-02&8.14e-05&1.27e-03&4.40e-03&4.62e-03\\\hline
\multirow{2}{*}{$5$} &9.77e-04&8.98e-03&1.78e-02&1.84e-02&1.02e-05&1.48e-04&4.88e-04&5.20e-04\\
 &9.77e-04&1.25e-02&2.22e-02&2.30e-02&1.02e-05&3.13e-04&1.23e-03&1.30e-03\\\hline
\multirow{2}{*}{$6$} &2.44e-04&2.95e-03&6.29e-03&6.56e-03&1.27e-06&3.37e-05&1.20e-04&1.31e-04\\
 &2.44e-04&3.20e-03&7.23e-03&7.66e-03&1.27e-06&3.62e-05&2.47e-04&2.89e-04\\\hline
\multirow{2}{*}{$7$} &6.10e-05&8.96e-04&2.22e-03&2.35e-03&1.59e-07&5.05e-06&2.91e-05&3.31e-05\\
 &6.10e-05&1.11e-03&2.65e-03&2.88e-03&1.59e-07&8.38e-06&6.91e-05&9.12e-05\\\hline
\multirow{2}{*}{$8$} &1.53e-05&2.96e-04&7.86e-04&8.36e-04&1.99e-08&1.04e-06&6.94e-06&8.16e-06\\
 &1.53e-05&3.44e-04&1.00e-03&1.12e-03&1.99e-08&1.37e-06&2.16e-05&3.52e-05\\\hline
\multirow{2}{*}{$9$} &3.81e-06&9.34e-05&2.81e-04&3.02e-04&2.48e-09&1.90e-07&1.67e-06&2.02e-06\\
 &3.81e-06&9.15e-05&3.27e-04&3.64e-04&2.48e-09&1.60e-07&4.70e-06&6.18e-06\\\hline
\multirow{2}{*}{$10$} &9.54e-07&2.78e-05&9.54e-05&1.03e-04&3.10e-10&4.07e-08&4.20e-07&5.14e-07\\
 &9.54e-07&2.69e-05&1.19e-04&1.32e-04&3.10e-10&2.49e-08&1.83e-06&2.37e-06\\\hline
\multirow{2}{*}{$11$} &2.38e-07&8.95e-06&3.34e-05&3.64e-05&3.88e-11&6.34e-09&9.59e-08&1.21e-07\\
 &2.38e-07&8.25e-06&4.10e-05&4.71e-05&3.88e-11&3.76e-09&3.25e-07&5.44e-07\\\hline
\multirow{2}{*}{$12$} &5.96e-08&2.68e-06&1.18e-05&1.30e-05&4.85e-12&1.30e-09&2.36e-08&3.03e-08\\
 &5.96e-08&2.54e-06&1.46e-05&1.68e-05&4.85e-12&6.34e-10&1.16e-07&1.64e-07\\\hline
\multirow{2}{*}{$13$} &1.49e-08&8.29e-07&4.05e-06&4.50e-06&6.06e-13&2.04e-10&5.79e-09&7.61e-09\\
 &1.49e-08&7.08e-07&4.69e-06&5.76e-06&6.06e-13&9.21e-11&2.45e-08&5.16e-08\\\hline
\multirow{2}{*}{$14$} &3.73e-09&2.50e-07&1.40e-06&1.57e-06&7.58e-14&4.11e-11&1.40e-09&1.88e-09\\
 &3.73e-09&1.97e-07&1.60e-06&1.98e-06&7.57e-14&1.29e-11&7.04e-09&2.04e-08\\\hline
\multirow{2}{*}{$15$} &9.31e-10&7.70e-08&4.91e-07&5.54e-07&9.27e-15&6.15e-12&3.45e-10&4.79e-10\\
 &9.31e-10&5.59e-08&5.50e-07&7.09e-07&9.33e-15&1.74e-12&2.52e-09&4.52e-09\\\hline
\multirow{2}{*}{$16$} &2.33e-10&2.34e-08&1.71e-07&1.95e-07&1.22e-15&1.00e-12&8.51e-11&1.22e-10\\
 &2.33e-10&1.69e-08&1.89e-07&2.52e-07&1.11e-15&2.70e-13&9.54e-10&1.45e-09\\\hline
%\multirow{2}{*}{$17$} &5.82e-11&7.10e-09&6.01e-08&6.89e-08&-5.55e-17&1.60e-13&2.07e-11&3.08e-11\\
% &5.82e-11&4.45e-09&6.65e-08&8.67e-08&0.00e+00&-4.41e-14&5.16e-10&6.32e-10\\\hline
%\multirow{2}{*}{$18$} &1.46e-11&2.19e-09&2.06e-08&2.39e-08&5.55e-17&6.14e-15&5.15e-12&7.89e-12\\
% &1.46e-11&1.25e-09&2.04e-08&2.67e-08&0.00e+00&5.15e-14&3.55e-11&7.10e-11\\\hline
%\multirow{2}{*}{$19$} &3.64e-12&6.27e-10&7.00e-09&8.24e-09&-1.67e-16&2.91e-14&1.26e-12&1.95e-12\\
% &3.64e-12&3.47e-10&6.58e-09&9.24e-09&0.00e+00&2.68e-13&1.40e-11&2.67e-11\\\hline
%\multirow{2}{*}{$20$} &9.09e-13&1.85e-10&2.45e-09&2.91e-09&5.55e-17&-1.40e-13&3.44e-13&5.38e-13\\
% &9.09e-13&9.68e-11&2.13e-09&3.16e-09&0.00e+00&-3.42e-13&5.27e-12&9.93e-12\\\hline
\end{tabular}
  \end{center}
\caption{Values of $B(\bsq, \alpha, \bsgamma)$ and $B((C_1,\dots,C_s), \alpha,\bsgamma)$ for $\gamma_j=j^{-2}$, $j=1,\dots,s$ and $\bsq$ constructed using the CBC algorithm; the top number gives the value of $B(\bsq, \alpha,\bsgamma)$, the bottom the value of $B((C_1, \dots, C_s), \alpha,\bsgamma)$.}
% Values of $B(\bsq, \alpha,\bsgamma)$ for $\gamma_j=j^{-2}$,
% $j=1,\dots,s$ and $\bsq$ constructed using the CBC algorithm; the
% top number corresponds to the CBC, the bottom to the digital net}
\label{table3}
\end{table}

\noindent{\bf Author's Addresses:}\\

\noindent Jan Baldeaux, School of Mathematics and Statistics, The University of New South Wales, Sydney 2052, Australia. Email: Jan.Baldeaux@unsw.edu.au\\

\noindent Josef Dick, School of Mathematics and Statistics, The University of New South Wales, Sydney 2052, Australia. Email: josef.dick@unsw.edu.au\\

\end{document}